\theoremstyle{plain}
\newtheorem{theorem}{Theorem}[section]
\newtheorem{lemma}[theorem]{Lemma}
\newtheorem{proposition}[theorem]{Proposition}
\newtheorem{corollary}[theorem]{Corollary}
\theoremstyle{definition}
\newtheorem{definition}[theorem]{Definition}
\theoremstyle{remark}
\newcommand\A{\mathcal{A}}
\newcommand\pgma{\mathcal{M}_{G,\psi}}
\newcommand\psitilde{\widetilde{\psi}}
\DeclareMathOperator{\Der}{Der}
\title {Vertex-weighted Graphs and Freeness of $ \psi $-graphical Arrangements 
}
\author{
Daisuke Suyama\thanks{
Department of Mathematics, Hokkaido University, Sapporo, Hokkaido 060-0810, Japan.
email:dsuyama@math.sci.hokudai.ac.jp}, 
Shuhei Tsujie\thanks{Department of Mathematics, Hokkaido University, Sapporo, Hokkaido 060-0810, Japan.
email:tsujie@math.sci.hokudai.ac.jp}
}
\date{}
\begin{document}
\maketitle
	
\begin{abstract}
Let $ G $ be a simple graph on $ \ell $ vertices $ \{1, \dots, \ell \} $ with edge set $ E_{G} $. 
The graphical arrangement $ \mathcal{A}_{G} $ consists of hyperplanes $ \{x_{i}-x_{j}=0\} $, where $ \{i, j \} \in E_{G} $. 
It is well known that three properties, chordality of $ G $, supersolvability of $ \mathcal{A}_{G} $, and freeness of $ \mathcal{A}_{G} $ are equivalent. 
Recently, Richard P. Stanley introduced $ \psi $-graphical arrangement $ \mathcal{A}_{G, \psi} $ as a generalization of graphical arrangements. 
Lili Mu and Stanley characterized the supersolvability of the $ \psi $-graphical arrangements and conjectured that the freeness and the supersolvability of $ \psi $-graphical arrangements are equivalent. 
In this paper, we will prove the conjecture. 
\end{abstract}

{\footnotesize {\it Keywords:} 
Hyperplane arrangement, 
Graphical arrangement, 
Ish arrangement,
Coxeter arrangement, 
Free arrangement,
Supersolvable arrangement,
Chordal graph, 
Vertex-weighted graph,
Multiarrangement
}

{\footnotesize {\it 2010 MSC:}  
52C35, 
32S22,  
05C15, 
05C22, 
20F55,  
13N15 
}

\section{Introduction}\label{sec:introduction}
Let $ V $ be an $ (\ell + 1) $-dimensional vector space over a field $ \mathbb{K} $ and $ \{z, x_{1}, \dots, x_{\ell}\} $ a basis for the dual space $ V^{\ast} $. 
A \textbf{central hyperplane arrangement} (arrangement, for short) in $ V $ is a finite set of vector subspaces of codimension $ 1 $.  

Let $ G $ be a simple graph with vertex set $ V_{G} = \{1, \dots, \ell \} $ and edge set $ E_{G} $. 
Let $ \psi \colon V_{G} \rightarrow 2^{\mathbb{K}} $ be a map satisfying $ |\psi(i)| < \infty $ for every vertex $ i \in V_{G} $. 
The \textbf{graphical arrangement} $ \mathcal{A}_{G} $ and the \textbf{$ \psi $-graphical arrangement} $ \mathcal{A}_{G,\psi} $ are defined by
\begin{align*}
\mathcal{A}_{G} \coloneqq \left\{\{x_{i} - x_{j} = 0\} \mid \{i,j\} \in E_{G} \right\}, 
\end{align*}
\begin{align*}
\mathcal{A}_{G,\psi} \coloneqq \left\{ \{ z=0 \} \right\} \cup \left\{ \{ x_{i}-x_{j} = 0 \} \mid \{i,j\} \in E_{G} \right\} \\
\cup \left\{\{x_{i}=az\} \mid 1 \leq i \leq \ell, a \in \psi(i) \right\}, 
\end{align*}
where $ \{ \alpha = 0 \} \, (\alpha \in V^{\ast}) $ stands for the hyperplane $ \{ v \in V \mid \alpha(v) = 0 \} $.   
When $ G $ is complete, the arrangement $ \mathcal{A}_{G} $ is known as the braid arrangement or the Coxeter arrangement of type $ A_{\ell-1} $. 
The $ \psi $-graphical arrangements were introduced by Richard P. Stanley \cite{stanley2015valid} to investigate the number of chambers of the visibility arrangements of order polytopes. 

For a central arrangement $ \mathcal{A} $, 
the \textbf{intersection lattice} $ L(\mathcal{A}) $ is the set of intersections of hyperplanes in $ \mathcal{A} $ with the order by reverse inclusion: 
$ X \leq Y \Leftrightarrow X \supseteq Y $. 
We say that $ \mathcal{A} $ is \textbf{supersolvable} if the lattice $ L(\mathcal{A}) $ is supersolvable as defined by Stanley \cite{stanley1972supersolvable}. 

Let $ S $ be the symmetric algebra of the dual space $ V^{\ast} $. 
Then $ S $ can be identified with the polynomial ring $ \mathbb{K}[z, x_{1}, \dots, x_{\ell}]. $
Let $\Der(S)$ be the module of derivations of $S$:
\begin{multline*}
\Der(S):=
\{ \theta : S \rightarrow S \mid
\theta \text{ is } \mathbb{K}\text{-linear}, \\
\theta(fg)=f\theta(g) + \theta(f)g \text{ for any } f,g \in S \}.
\end{multline*}
The module of logarithmic derivations $D(\mathcal{A})$ is defined to be
\begin{align*}
D(\mathcal{A}) 
&:=\{ \theta \in \Der(S) \mid \theta (Q(\mathcal{A})) \in Q(\mathcal{A}) S \} \\
&=\{ \theta \in \Der(S) \mid 
\theta (\alpha_{H}) \in \alpha_{H} S \text{ for any } H \in \mathcal{A} \},
\end{align*}
where $Q(\mathcal{A}) \coloneqq \prod_{H \in \mathcal{A}} \alpha_{H} $ is the defining polynomial of $\mathcal{A}$
and $\alpha_{H}$ is a linear form such that $\ker (\alpha_{H}) = H$.
We say that $\mathcal{A}$ is \textbf{free} if $D(\mathcal{A})$ is a free $S$-module.
It is well known that a supersolvable arrangement is free \cite{jambu1984free, orlik1992arrangements}. 
When $ \mathcal{A} $ is free, the module $D(\mathcal{A})$ has a homogeneous basis $\{ \theta_{0}, \ldots ,\theta_{\ell} \}$. 
The tuple of degrees 
$\exp \mathcal{A} = (\deg \theta_{0}, \ldots ,\deg \theta_{\ell})$
is called the \textbf{exponents} of $\mathcal{A}$. 

A graph is called \textbf{chordal} if every cycle of length at least four has a \textbf{chord}, which is an edge that is not part of the cycle but connects two vertices of the cycle. 
A vertex $ v $ is called \textbf{simplicial} if the subgraph induced by its neighbors is complete. 
We say that an ordering of the vertices $ (v_{1}, \dots, v_{\ell}) $ is a \textbf{perfect elimination ordering } if each $ v_{i} $ is simplicial in the subgraph induced by the vertices $ \{v_{1}, \dots, v_{i} \} $. 
\begin{theorem}[Fulkerson-Gross {\cite[Section 7]{fulkerson1965incidence}}]\label{thm:FG}
Let $ G $ be a simple graph. 
Then $ G $ is chordal if and only if $ G $ has a perfect elimination ordering. 
\end{theorem}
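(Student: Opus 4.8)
The plan is to prove the two implications separately: the "if" direction (perfect elimination ordering $\Rightarrow$ chordal) is short and direct, while the "only if" direction rests on the classical fact that a chordal graph has a simplicial vertex, which I would derive from Dirac's lemma on minimal separators.

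\medskip

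For the direction asserting that a perfect elimination ordering forces chordality, I would argue by contradiction. Suppose $(v_1,\dots,v_\ell)$ is a perfect elimination ordering and $C$ is a chordless cycle of length at least four. Let $v_k$ be the vertex of $C$ with the \emph{largest} index, and let $u,w$ be its two neighbours on $C$. Since every other vertex of $C$ has a smaller index, we have $u,w\in\{v_1,\dots,v_{k-1}\}$, so $u$ and $w$ are both neighbours of $v_k$ in the subgraph induced by $\{v_1,\dots,v_k\}$; as $v_k$ is simplicial there, $u$ and $w$ must be adjacent. But $u$ and $w$ are non-consecutive on $C$ because $|C|\ge 4$, so the edge $uw$ is a chord of $C$, a contradiction.

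\medskip

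For the converse I would induct on $\ell=|V_G|$, using two ingredients: (a) every induced subgraph of a chordal graph is chordal, which is immediate since a chordless cycle of length $\ge 4$ in $G[W]$ is again a chordless cycle in $G$; and (b) every chordal graph with at least one vertex has a simplicial vertex. Granting (b), pick a simplicial vertex $v$ of $G$, set $v_\ell:=v$, apply the inductive hypothesis to the chordal graph $G-v$ to obtain a perfect elimination ordering $(v_1,\dots,v_{\ell-1})$ of it, and observe that $(v_1,\dots,v_\ell)$ is a perfect elimination ordering of $G$: for $i<\ell$ the subgraph induced by $\{v_1,\dots,v_i\}$ in $G$ coincides with the one in $G-v$, and $v_\ell$ is simplicial in $G$ by choice.

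\medskip

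The real content is (b), and here I would prove the stronger statement of Dirac: a chordal graph that is not a clique has two non-adjacent simplicial vertices (and a clique has all of its vertices simplicial), again by induction on the number of vertices. Given a non-complete chordal graph $G$, choose non-adjacent vertices $a,b$ and a minimal vertex set $S$ separating $a$ from $b$; let $A$ and $B$ be the components of $G-S$ containing $a$ and $b$ respectively. The step I expect to be the main obstacle is the sub-claim that $S$ is a clique: if $u,v\in S$ were non-adjacent, then minimality of $S$ forces each of $u$ and $v$ to have a neighbour in $A$ and a neighbour in $B$, so concatenating a shortest $u$–$v$ path with all internal vertices in $A$ with a shortest $u$–$v$ path with all internal vertices in $B$ yields a cycle of length at least four that has no chord (shortest paths are induced, $u$ and $v$ are non-adjacent, and there are no edges between $A$ and $B$), contradicting chordality. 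Once $S$ is known to be a clique, I consider the induced subgraph $G[A\cup S]$, which has fewer vertices than $G$ since $b\notin A\cup S$; by the inductive hypothesis it contains a simplicial vertex lying in $A$ — if $G[A\cup S]$ is a clique, any vertex of $A$ works, and otherwise it has two non-adjacent simplicial vertices of which at least one lies in $A$ because $S$ is a clique — and such a vertex is simplicial in $G$ as well, its $G$-neighbourhood being contained in $A\cup S$. Symmetrically $G[B\cup S]$ provides a simplicial vertex of $G$ inside $B$, and the two resulting vertices are non-adjacent because $A$ and $B$ are distinct components of $G-S$. This proves (b) and completes the induction.
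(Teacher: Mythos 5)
The paper does not prove this statement at all: it is quoted as a known result of Fulkerson--Gross, and the simplicial-vertex ingredient you reprove (Dirac's lemma) is likewise only cited (Lemma~\ref{lem:dirac}) and used later in the proof of Theorem~\ref{thm:vwg}. So there is nothing in the paper to compare against except the citation; judged on its own, your argument is the standard one and is correct. The easy direction (taking the largest-indexed vertex of a chordless cycle and using that it is simplicial among its predecessors, so its two cycle-neighbours are adjacent and give a chord) is fine with the paper's definition of perfect elimination ordering, and the induction appending a simplicial vertex as $v_\ell$ and recursing on $G-v$ is exactly what the definition requires. Your proof of Dirac's statement via a minimal $a$--$b$ separator $S$ is also sound: minimality gives each vertex of $S$ a neighbour in both components $A$ and $B$, the two shortest paths glue to a chordless cycle of length at least $4$ if two vertices of $S$ were non-adjacent, and since $S$ is a clique the inductive hypothesis applied to $G[A\cup S]$ and $G[B\cup S]$ yields non-adjacent simplicial vertices in $A$ and in $B$, which remain simplicial in $G$ because their neighbourhoods lie in $A\cup S$ (resp.\ $B\cup S$). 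The only things left implicit are trivial (the base cases of both inductions and the fact that induced subgraphs of chordal graphs are chordal, which you do state); none of these is a gap. In effect you have supplied self-contained proofs of both Theorem~\ref{thm:FG} and Lemma~\ref{lem:dirac}, which the paper treats as black boxes.
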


These notions for graphs and properties for graphical arrangements are related by the following theorem. 

\begin{theorem}[Stanley {\cite[Corollary 4.10]{stanley2007introduction}}, Edelman-Reiner {\cite[Theorem 3.3]{edelman1994free}}]\label{thm:stanley}
The following three conditions are equivalent: 
\begin{enumerate}
\item $ G $ is chordal. 
\item $ \mathcal{A}_{G} $ is supersolvable. 
\item $ \mathcal{A}_{G} $ is free. 
\end{enumerate}
\end{theorem}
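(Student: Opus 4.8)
The plan is to prove the two substantial implications $(1)\Rightarrow(2)$ and $(3)\Rightarrow(1)$; the implication $(2)\Rightarrow(3)$ is the general fact, already recalled above, that every supersolvable arrangement is free. Throughout I would use the standard identification of $L(\mathcal{A}_G)$ with the bond lattice of $G$: its elements are the partitions $\pi$ of $V_G$ all of whose blocks induce connected subgraphs, ordered by refinement (the partition into singletons being $\hat{0}$ and the partition into connected components being $\hat{1}$), and for an edge $\{i,j\}\in E_G$ the flat $X_\pi$ is contained in $\{x_i-x_j=0\}$ exactly when $i$ and $j$ lie in a common block of $\pi$.

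For $(1)\Rightarrow(2)$ I would induct on $\ell$ (the case $E_G=\emptyset$ being trivial). A chordal graph has a simplicial vertex $v$ — for instance the last vertex of a perfect elimination ordering, which exists by Theorem~\ref{thm:FG} — and we may take $v$ to be non-isolated. Since a simplicial vertex is not a cut vertex, deleting $v$ does not disconnect its component, so the partition $z$ obtained from $\hat{1}$ by splitting $\{v\}$ off from its block is a coatom of $L(\mathcal{A}_G)$, and the interval $[\hat{0},z]$ is canonically isomorphic to $L(\mathcal{A}_{G-v})$; as $G-v$ is again chordal, this is supersolvable by the induction hypothesis. It then remains to verify that $z$ is a modular element of $L(\mathcal{A}_G)$, that is, $\mathrm{rank}\,z+\mathrm{rank}\,\pi=\mathrm{rank}(z\vee\pi)+\mathrm{rank}(z\wedge\pi)$ for every flat $\pi$; the essential input is that the neighbours of $v$ form a clique. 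Since $z$ is then a modular coatom with $[\hat{0},z]$ supersolvable, $L(\mathcal{A}_G)$ is supersolvable. (Equivalently, one may invoke the known criterion that $\mathcal{A}_G$ is supersolvable precisely when the vertices admit an ordering in which the earlier neighbours of every vertex form a clique — that is, a perfect elimination ordering.)

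For $(3)\Rightarrow(1)$ I would argue by contraposition. If $G$ is not chordal it contains a chordless cycle on a vertex set $W$ with $|W|=k\geq 4$. The partition $\pi_W$ whose only block of size greater than one is $W$ is a flat of $\mathcal{A}_G$, and because the cycle is chordless the localization $(\mathcal{A}_G)_{\pi_W}$ consists of exactly the hyperplanes $\{x_i-x_j=0\}$ with $\{i,j\}$ an edge of the $k$-cycle; up to a trivial product factor coming from the unused coordinates this is the graphical arrangement $\mathcal{A}_{C_k}$ of a $k$-cycle. Since freeness is inherited by localizations, it suffices to show that $\mathcal{A}_{C_k}$ is not free for $k\geq 4$. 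Its characteristic polynomial equals the chromatic polynomial of $C_k$, namely $(t-1)^k+(-1)^k(t-1)=(t-1)\bigl((t-1)^{k-1}+(-1)^k\bigr)$, and for $k\geq 4$ the second factor has a non-real root, so this polynomial does not split into linear factors over $\mathbb{Z}$. By Terao's factorization theorem the characteristic polynomial of a free arrangement does split in this way (with the exponents as roots), so $\mathcal{A}_{C_k}$, and hence $\mathcal{A}_G$, is not free.

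I expect the modularity verification in $(1)\Rightarrow(2)$ to be the main obstacle: establishing the rank identity for the coatom $z$ against an arbitrary partition requires a real case analysis of meets and joins in the bond lattice, and the clique structure coming from simpliciality is exactly what makes the identity hold — this is the technical heart of the argument. By comparison, $(3)\Rightarrow(1)$ is relatively routine once one has the two standard inputs (stability of freeness under localization and Terao's factorization theorem) together with the elementary fact that a non-chordal graph contains a chordless cycle of length at least four.
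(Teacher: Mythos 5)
Theorem~\ref{thm:stanley} is quoted in the paper as a known result of Stanley and Edelman--Reiner and is not proved there, so there is no in-paper argument to compare against; I can only assess your sketch on its own terms. Your outline follows the classical proofs. For $(3)\Rightarrow(1)$, localizing at the flat of a chordless cycle (the localization is indeed exactly $\mathcal{A}_{C_k}$ up to an empty factor, because the cycle is induced), using that localizations of free arrangements are free (a fact the paper itself recalls), and then ruling out freeness of $\mathcal{A}_{C_k}$ via Terao's factorization theorem and the chromatic polynomial $(t-1)^k+(-1)^k(t-1)$, which has non-real roots for $k\geq 4$, is correct and standard. For $(1)\Rightarrow(2)$, the induction on a simplicial vertex $v$, the coatom $z$ obtained by splitting $\{v\}$ off (legitimate, since a simplicial vertex is not a cut vertex), and the identification $[\hat{0},z]\cong L(\mathcal{A}_{G-v})$ is exactly Stanley's route.

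Two caveats. The technical heart of $(1)\Rightarrow(2)$ is only announced, not carried out: you still owe the verification that the clique condition on the neighbours of $v$ makes $z$ modular in the bond lattice, and the (standard but not free) lemma that a modular coatom whose lower interval is supersolvable forces the whole lattice to be supersolvable. Moreover, your parenthetical fallback --- invoking ``the known criterion that $\mathcal{A}_G$ is supersolvable precisely when the vertices admit an ordering whose earlier neighbourhoods are cliques'' --- is circular in this context, since that criterion combined with Theorem~\ref{thm:FG} is precisely the equivalence $(1)\Leftrightarrow(2)$ you are proving. So the proposal is a sound blueprint of the standard proof, correctly identifying where the real work lies, but as written it is an outline rather than a complete argument.
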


We say that a perfect elimination ordering $ (v_{1}, \dots, v_{\ell}) $ of $ G $ is a \textbf{weighted elimination ordering} if $ \psi(v_{i}) \supseteq \psi(v_{j}) $ whenever $ i < j $ and  $\{v_{i}, v_{j}\} \in E_{G} $. 
Lili Mu and Stanley \cite{mu2015supersolvability} characterized the supersolvability of $ \psi $-graphical arrangements and conjectured that the freeness and the supersolvability of $ \psi $-graphical arrangements are equivalent. 

\begin{theorem}[Stanley {\cite[Theorem 6]{stanley2015valid}} Mu-Stanley {\cite[Theorem 1, 2]{mu2015supersolvability}}]\label{thm:MS1}
A $ \psi $-graphical arrangement $ \mathcal{A}_{G, \psi} $ is supersolvable if and only if $ (G, \psi) $ has a weighted elimination ordering. 
\end{theorem}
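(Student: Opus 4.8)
The plan is to induct on the number $\ell$ of vertices, peeling off one vertex at each step, and to match this with the recursive description of supersolvability: a geometric lattice of rank at most $1$ is supersolvable, and a geometric lattice of rank $r\ge 2$ is supersolvable precisely when it has a \emph{modular coatom} $M$ such that the interval $[\hat{0},M]$ is supersolvable. For an arrangement $\mathcal{A}$ and a flat $M$ one has $[\hat{0},M]\cong L(\mathcal{A}_M)$ with $\mathcal{A}_M=\{H\in\mathcal{A}\mid M\subseteq H\}$, so the recursion consists of repeatedly replacing $\mathcal{A}$ by a corank-one localization at a modular flat. The base case $\ell=0$, where $\mathcal{A}_{G,\psi}=\{\{z=0\}\}$ and the empty ordering works, is common to both directions.

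Suppose first that $(G,\psi)$ has a weighted elimination ordering $(v_1,\dots,v_\ell)$, and set $v=v_\ell$. If $v$ is isolated with $\psi(v)=\varnothing$, then no hyperplane involves $x_v$, so $\mathcal{A}_{G,\psi}$ and $\mathcal{A}_{G-v,\psi'}$ have isomorphic intersection lattices ($\psi'$ being the restriction of $\psi$), and we finish by induction, $(v_1,\dots,v_{\ell-1})$ being a weighted elimination ordering of $(G-v,\psi')$. Otherwise, split $\mathcal{A}_{G,\psi}=\mathcal{A}'\sqcup\mathcal{A}''$, where $\mathcal{A}''$ collects the hyperplanes $\{x_v=x_u\}$ with $u$ adjacent to $v$ and $\{x_v=az\}$ with $a\in\psi(v)$, and $\mathcal{A}'=\mathcal{A}_{G-v,\psi'}$ (regarded in the ambient space of $\mathcal{A}_{G,\psi}$). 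The decisive point is a ``fibration'' property of $\mathcal{A}''$ relative to $\mathcal{A}'$: for any two hyperplanes of $\mathcal{A}''$, a suitable scalar combination of their defining forms is a defining form of a hyperplane of $\mathcal{A}'$. The three possible types of pairs produce the forms $x_u-x_w$, $x_u-az$, and $(a-b)z$, which lie in $\mathcal{A}'$ because the neighbourhood of $v$ is a clique (as $v$ is simplicial in $G=G[\{v_1,\dots,v_\ell\}]$), because $\psi(u)\supseteq\psi(v)$ whenever $u$ is adjacent to $v$ (as $u=v_i$ with $i<\ell$), and because $\{z=0\}\in\mathcal{A}'$. From this condition a short rank computation identifies $M=\bigcap_{H\in\mathcal{A}'}H$ as a modular coatom of $L(\mathcal{A}_{G,\psi})$ with $\mathcal{A}_M=\mathcal{A}'$; induction then makes $[\hat{0},M]\cong L(\mathcal{A}_{G-v,\psi'})$ supersolvable, hence $L(\mathcal{A}_{G,\psi})$ is supersolvable. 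This is the more elementary of the two directions.

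Conversely, assume $\mathcal{A}_{G,\psi}$ is supersolvable with $\ell\ge 1$ and let $M$ be a modular coatom of $L(\mathcal{A}_{G,\psi})$. The first ingredient is the standard consequence of modularity that for any two hyperplanes $H,H'$ not containing $M$ there is a hyperplane $K$ of $\mathcal{A}_{G,\psi}$, distinct from $H$ and $H'$, with $M\subseteq K$ and $\alpha_K=a\,\alpha_H+b\,\alpha_{H'}$ for some nonzero $a,b$. Since every defining form of a $\psi$-graphical arrangement involves at most two of the coordinates $z,x_1,\dots,x_\ell$, feeding the list of such forms into this condition is a finite case analysis; it shows that the hyperplanes off a modular coatom always share a common coordinate, and---this is where the real work lies---that a modular coatom $M$ can be chosen for which that coordinate is some $x_v$ and the hyperplanes off $M$ are exactly all the $\{x_v=x_u\}$ ($u$ adjacent to $v$) and $\{x_v=az\}$ ($a\in\psi(v)$), i.e.\ $M$ has the vertex-peeling form of the previous paragraph. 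For such an $M$, applying the same consequence of modularity to the three types of pairs among the off-$M$ hyperplanes forces $\{u,w\}\in E_G$ for all neighbours $u,w$ of $v$ (so $v$ is simplicial) and $a\in\psi(u)$ for every $a\in\psi(v)$ and every neighbour $u$ of $v$ (so $\psi(u)\supseteq\psi(v)$). Since $\mathcal{A}_M=\mathcal{A}_{G-v,\psi'}$ is supersolvable, induction provides a weighted elimination ordering $(v_1,\dots,v_{\ell-1})$ of $(G-v,\psi')$, and appending $v_\ell=v$ yields one for $(G,\psi)$: the elimination and weight conditions at steps $i<\ell$ are inherited (the induced subgraphs $G[\{v_1,\dots,v_i\}]$ being unchanged by deleting $v$), and at step $\ell$ they are exactly the two properties just extracted.

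The main obstacle is this structural step in the converse: classifying the modular coatoms of a $\psi$-graphical arrangement precisely enough to guarantee that, once $\ell\ge 1$, one of them peels off a single vertex. The ``if'' direction, by contrast, reduces to checking the fibration property and invoking the standard lemma that it yields a modular coatom; handling the hyperplane $\{z=0\}$ and the vertices with $\psi(i)=\varnothing$ there is only a minor bookkeeping matter.
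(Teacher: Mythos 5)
Your ``if'' direction is essentially sound: peeling off the last vertex $v=v_\ell$ of a weighted elimination ordering and checking, for each pair of hyperplanes off $M$, that a suitable difference of their forms ($x_u-x_w$, $x_u-az$, or $z$) defines a hyperplane of $\mathcal{A}'=\mathcal{A}_{G-v,\psi'}$ is a correct and standard way to produce a modular coatom, and it plays the same role as the paper's route to sufficiency (the explicit lower-triangular basis of Theorem \ref{basis} combined with Ziegler's criterion); note the paper only reproves this direction and quotes the necessity from Mu--Stanley.

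The converse, however, has a genuine gap at exactly the point you flag as ``the real work'': you assert, with no argument beyond ``a finite case analysis,'' that a modular coatom can be \emph{chosen} whose complement is precisely the full vertex star $\left\{\{x_v-x_u=0\} \mid u \text{ adjacent to } v\right\}\cup\left\{\{x_v-az=0\}\mid a\in\psi(v)\right\}$. The pairwise consequence of modularity only constrains which pairs of hyperplanes may simultaneously lie off a modular coatom; it does not produce such a choice. Indeed, supersolvable $\psi$-graphical arrangements admit modular coatoms whose complement is $\{\{z=0\}\}$ alone, or a proper sub-star such as $\{\{x_1-x_2=0\}\}$ even when $\psi(1)\neq\varnothing$ (a one-element complement satisfies the pair condition vacuously), or a set like $\{\{x_1-az=0\},\{x_2-az=0\}\}$ whose members share only the coordinate $z$; so ``some modular coatom exists'' does not reduce, by inspecting pairs of defining forms, to ``a vertex-peeling modular coatom exists.'' Establishing that reduction is precisely the hard half of the theorem (Mu--Stanley's contribution), and your proposal does not supply it. A secondary point: even granted a vertex-star modular coatom $M$, your induction uses that $\mathcal{A}_M=\mathcal{A}_{G-v,\psi'}$ is supersolvable, but $M$ need not belong to the given maximal chain of modular elements, so you should invoke (and state) the fact that localizations of supersolvable arrangements are supersolvable. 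As written, the necessity direction is not proved.
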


We say that a path $ v_{1} \cdots v_{k} $ in $ G $ is \textbf{unimodal}
if there exists $ i \in \{1, \dots, k\} $ such that $ \psi(v_{1}) \subseteq \dots \subseteq \psi(v_{i}) \supseteq \dots \supseteq \psi(v_{k}) $. 
Note that an edge $ \{i, j\} \in E_{G} $ is unimodal if $ \psi(i) $ and $ \psi(j) $ are comparable, i.e., $ \psi(i) \subseteq \psi(j) $ or $ \psi(i) \supseteq \psi(j) $. 

The main result in this paper is as follows: 

\begin{theorem}\label{thm:main1}
For $ \psi $-graphical arrangments $ \mathcal{A}_{G, \psi} $, the following conditions are equivalent: 
\begin{enumerate}
\item $ (G, \psi) $ has a weighted elimination ordering. 
\item $ \mathcal{A}_{G, \psi} $ is supersolvable. 
\item $ \mathcal{A}_{G, \psi} $ is free. 
\item $ G $ is chordal and does not contain the both of the following induced paths: 
\begin{enumerate}
\item[(i)] An edge $ \{v_{1}, v_{2}\} $ such that $ \psi(v_{1}) $ and $ \psi(v_{2}) $ are not comparable. 
\item[(ii)] A path $ v_{1} \cdots v_{k} $ with $ k \geq 3 $ and $ \psi(v_{1}) \supsetneq \psi(v_{2}) = \dots = \psi(v_{k-1}) \subsetneq \psi(v_{k}) $. 
\end{enumerate}
\item $ G $ is chordal and every induced path is unimodal. 
\end{enumerate}
\end{theorem}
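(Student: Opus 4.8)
The plan is to prove the cycle of implications $(1)\Rightarrow(2)\Rightarrow(3)\Rightarrow(4)\Rightarrow(5)\Rightarrow(1)$. Two of the links come for free: $(1)\Leftrightarrow(2)$ is Theorem~\ref{thm:MS1}, and $(2)\Rightarrow(3)$ is the classical fact that supersolvable arrangements are free. Thus the new content is the two combinatorial steps $(4)\Rightarrow(5)$ and $(5)\Rightarrow(1)$, together with the arrangement-theoretic step $(3)\Rightarrow(4)$, which carries the conjecture.

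For $(4)\Rightarrow(5)$: since $G$ is chordal it is enough to check that every induced path $v_1\cdots v_k$ is unimodal. If some induced path is not unimodal, then the absence of type-(i) edges forces consecutive weights along it to be comparable, so one may speak of strict increases and decreases; taking the first position at which the weight sequence strictly decreases and then the first later position at which it strictly increases, the vertices strictly between them, together with the two flanking vertices, form a contiguous subpath of $v_1\cdots v_k$ -- hence an induced path -- of type (ii), a contradiction. For $(5)\Rightarrow(1)$ I would prove the key lemma: every $(G,\psi)$ satisfying $(5)$ has a simplicial vertex $v$ with $\psi(v)\subseteq\psi(u)$ for every neighbour $u$ of $v$. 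Granting this, deleting $v$ preserves $(5)$, so by induction $G-v$ has a weighted elimination ordering, and appending $v$ as the last (largest-index) vertex yields one for $(G,\psi)$. To prove the lemma, start from a vertex $w$ of globally minimal weight $m$: every neighbour of $w$ has weight containing $m$, so $w$ is automatically a "weight-local-minimum", and for any vertex the neighbours of strictly larger weight form a clique (two non-adjacent such neighbours would produce a forbidden type-(ii) valley of length two). Feeding these facts into the simplicial-vertex structure of the chordal graphs $G$ and $G[\psi^{-1}(m)]$ produces a vertex of weight $m$ that is simplicial in $G$, which is the desired $v$.

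For $(3)\Rightarrow(4)$ I would argue by contraposition, repeatedly using that a localization of a free arrangement is free \cite{orlik1992arrangements}. First, chordality: the localization of $\mathcal{A}_{G,\psi}$ at the flat $X=\bigcap_{\{i,j\}\in E_G}\{x_i=x_j\}$ equals $\mathcal{A}_G$, so freeness of $\mathcal{A}_{G,\psi}$ forces freeness of $\mathcal{A}_G$, whence $G$ is chordal by Theorem~\ref{thm:stanley}. Next, suppose $G$ contains a type-(i) edge $\{v_1,v_2\}$; then $\psi(v_1),\psi(v_2)$ are both non-empty, and localizing at $\{z=0\}\cap\{x_{v_1}=az\}\cap\{x_{v_2}=bz\}$ (any $a\in\psi(v_1)$, $b\in\psi(v_2)$) yields, up to inessential coordinates, the rank-three arrangement $\{z=0\}\cup\{x_{v_1}=x_{v_2}\}\cup\{x_{v_1}=cz:c\in\psi(v_1)\}\cup\{x_{v_2}=cz:c\in\psi(v_2)\}$; a direct computation shows that, exactly because $\psi(v_1)$ and $\psi(v_2)$ are incomparable, its characteristic polynomial does not factor into non-negative integer linear factors, so it is not free by Terao's factorization theorem -- a contradiction. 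Finally, suppose $G$ contains a type-(ii) induced path $P=v_1\cdots v_k$; choosing the localizing flat so as to force all coordinates outside $P$ and all internal coordinates $x_{v_2},\dots,x_{v_{k-1}}$ to vanish (using the path edges to propagate the vanishing through any internal vertex with empty weight set), one obtains precisely the $\psi$-graphical arrangement $\mathcal{A}_{P,\psi|_P}$ of the path itself, so it remains to show that $\mathcal{A}_{P,\psi|_P}$ is not free for the valley weight pattern $\psi(v_1)\supsetneq\psi(v_2)=\dots=\psi(v_{k-1})\subsetneq\psi(v_k)$.

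The main obstacle is exactly this last point. In contrast to chordality, which collapses to the graphical arrangement, and to the type-(i) edge, which collapses to a single rank-three arrangement, a valley path does not localize to one bounded obstruction: one cannot keep the hyperplane $\{z=0\}$ while discarding a proper sub-collection of the hyperplanes $\{x_i=cz\}$, so a whole family of arrangements, of unbounded rank $k+1$, must be shown non-free. I expect this to be carried out by computing the characteristic polynomial of $\mathcal{A}_{P,\psi|_P}$ -- the intersection lattice of a path is tractable -- and checking that Terao's factorization fails; alternatively, one may pass to the Ziegler restriction of $\mathcal{A}_{P,\psi|_P}$ to $\{z=0\}$, a multigraphical multiarrangement, and combine a freeness criterion for such multiarrangements with a Yoshinaga-type characterization. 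Either route, together with the bookkeeping for degenerate weight sets and the clique-tree argument underlying the key lemma behind $(5)\Rightarrow(1)$, is where the real work lies.
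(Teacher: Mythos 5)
Your outline of $(1)\Leftrightarrow(2)$, $(2)\Rightarrow(3)$, $(4)\Leftrightarrow(5)$ and the simplicial-vertex induction for $(5)\Rightarrow(1)$ is in the same spirit as the paper (Theorem \ref{thm:vwg}); note only that your clique claim is justified there just for the ``upper neighbours of a single vertex'' ($k=3$ valleys), whereas one needs the neighbourhood of a whole connected component of minimal-weight vertices to be a clique, which uses valleys of arbitrary length, as in the paper's proof. The genuine gap is in $(3)\Rightarrow(4)$. First, your treatment of a type-(i) edge is wrong: the rank-three localization $\{z=0\}\cup\{x_{1}-x_{2}=0\}\cup\{x_{1}=cz: c\in\psi(v_1)\}\cup\{x_{2}=cz: c\in\psi(v_2)\}$ has essential characteristic polynomial $(q-1)\bigl(q^{2}-(a+b+1)q+ab+a+b-t\bigr)$ with $a=|\psi(v_1)|$, $b=|\psi(v_2)|$, $t=|\psi(v_1)\cap\psi(v_2)|$, and for incomparable weights this can perfectly well factor over the nonnegative integers: e.g.\ $(a,b,t)=(5,2,1)$ gives $(q-1)(q-4)^{2}$, yet the arrangement is not free (it is the non-nest $N$-Ish arrangement of Theorem \ref{thm:Ish}). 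So Terao's factorization theorem cannot establish non-freeness here; some derivation-level input such as the $N$-Ish characterization (which is exactly how the paper proves Lemma \ref{lem:MS2}) is required.

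Second, the heart of the conjecture --- non-freeness of $\mathcal{A}_{P,\psi|_P}$ for a valley path --- is precisely what you leave open, and neither of your two suggested routes is viable as stated: the characteristic-polynomial route is suspect for the same reason as above, and the Ziegler-restriction/Yoshinaga route would require proving non-freeness of the corresponding graphical multiarrangement, which the paper obtains only as a \emph{consequence} of the simple-arrangement theorem (Section \ref{sec:multi}), so that direction is circular or at least no easier. The paper's missing idea is Lemma \ref{lem:nonfreepath}: induct on the length of the valley path using the triple $(\mathcal{A},\mathcal{A}',\mathcal{A}'')$ of Theorem \ref{thm:ad} with respect to $H=\{x_{\ell-1}-x_{\ell}=0\}$; the deletion $\mathcal{A}'$ has the weighted elimination ordering $(1,\dots,\ell)$ and is therefore free with known exponents (Corollary \ref{cor:MS3}), the restriction $\mathcal{A}''$ is the valley-path arrangement on $\ell-1$ vertices, and the base case $\ell=3$ is killed by comparing $\exp\mathcal{A}''$ (computed via Lemma \ref{lem:MS2} and Corollary \ref{cor:MS3}) with $\exp\mathcal{A}'$, contradicting $\exp\mathcal{A}''\subset\exp\mathcal{A}'$. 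Without this (or an equivalent) argument, the implication $(3)\Rightarrow(4)$, i.e.\ the Mu--Stanley conjecture itself, is not proved.
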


The organization of this paper is as follows.
In Section \ref{sec:vertex-weighted graphs}, we introduce a class of vertex-weighted graph over a poset in order to prove the equivalence $ (1) \Leftrightarrow (4) \Leftrightarrow (5) $ in Theorem \ref{thm:main1}. 
In Section \ref{sec:N-Ish}, we will describe a relation between $ \psi $-graphical arrangements and $ N $-Ish arrangements, which are other deformation of braid arrangements. 
In Section \ref{sec:basis}, we will construct a basis for the logarithmic derivation module of the $ \psi $-graphical arrangement when $ (G, \psi) $ has a weighted elimination ordering. 
In Section \ref{sec:proof}, we will complete the proof of Theorem \ref{thm:main1}. 
In Section \ref{sec:multi}, we will introduce the multiarrangements corresponding to vertex-weighted graphs over nonnegative integers and give a characterization of their freeness. 

\section{Vertex-weighted graphs over a poset}\label{sec:vertex-weighted graphs}
Let $ G = (V_{G}, E_{G}) $ be a simple graph with $ \ell $ vertices and $ P $ a poset. 
For a map $ \psi \colon V_{G} \rightarrow P $, we call the pair $ (G, \psi) $ a \textbf{vertex-weighted graph over} $ P $. 
If $ P $ is a singleton then the vertex-weighted graph $ (G, \psi) $ may be identified with the graph $ G $. 
Note that when $ P $ is the poset consisting of finite subsets in $ \mathbb{K} $, the pair $ (G, \psi) $ defines the $ \psi $-graphical arrangement $ \mathcal{A}_{G, \psi} $ as mentioned in Section \ref{sec:introduction}. 

We say that an ordering $ (v_{1}, \dots, v_{\ell}) $ of the vertices in $ (G, \psi) $ is a \textbf{weighted elimination ordering} if $ (v_{1}, \dots, v_{\ell}) $ is a perfect elimination ordering and $ \psi(v_{i}) \supseteq \psi(v_{j}) $ whenever $ i<j $ and $ \{v_{i}, v_{j}\} \in E_{G} $. 
For an induced subgraph $ S $ of $ G $, let $ \psi_{S} $ denote the restriction of $ \psi $ to $ V_{S} $. 
We call the pair $ (S,\psi_{S}) $ an \textbf{induced subgraph} of $ (G,\psi) $. 
Note that a weighted elimination ordering of $ (G,\psi) $ induces a weighted elimination ordering of any induced subgraph of $ (G,\psi) $. 
A path $ v_{1} \cdots v_{k} $ in $ (G, \psi) $ is said to be \textbf{unimodal} if there exists $ i \in \{1, \dots, k\} $ such that $ \psi(v_{1}) \leq \dots \leq \psi(v_{i}) \geq \dots \geq \psi(v_{k}) $. 

\begin{proposition}\label{prop:non-unimodal}
A path in $ (G, \psi) $ is unimodal if and only if it contains none of the following paths: 
\begin{enumerate}[(i)]
\item \label{prop:non-unimodal1} An edge $ \{v_{1}, v_{2}\} $ such that $ \psi(v_{1}) $ and $ \psi(v_{2}) $ are not comparable. 
\item \label{prop:non-unimodal2} A path $ v_{1} \cdots v_{k} $ with $ k \geq 3 $ and $ \psi(v_{1}) > \psi(v_{2}) = \dots = \psi(v_{k-1}) < \psi(v_{k}) $. 
\end{enumerate}
\end{proposition}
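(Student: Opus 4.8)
The statement is elementary, and the plan is to prove the two implications directly from the definition of unimodality. Throughout, write $a_{j} := \psi(v_{j})$ for a path $v_{1} \cdots v_{k}$ in $(G,\psi)$; ``unimodal'' then means that some peak index $i$ satisfies $a_{1} \le \cdots \le a_{i} \ge \cdots \ge a_{k}$.

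For the ``only if'' direction, suppose the path is unimodal with peak $i$. Every consecutive pair $a_{j}, a_{j+1}$ is comparable, so the path contains no edge of type (i). For type (ii), suppose for contradiction that a consecutive subpath $v_{a} \cdots v_{b}$ realizes $a_{a} > a_{a+1} = \cdots = a_{b-1} < a_{b}$ with $b \ge a+2$. I would split into the cases $b \le i$, $a \ge i$, and $a < i < b$. In the first, $a+1 \le b \le i$, so $a_{a} \le a_{a+1}$; in the second, $i \le a \le b-1$, so $a_{b-1} \ge a_{b}$; in the third, $i \in \{a+1,\dots,b-1\}$, hence $a_{i} = a_{a+1}$ and $a_{a} \le a_{i} = a_{a+1}$. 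Each contradicts one of the strict inequalities in the assumed pattern, so no subpath of type (ii) occurs.

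For the ``if'' direction, assume the path contains neither pattern; I want to show it is unimodal. Since type (i) is excluded, every consecutive pair is comparable. Let $i$ be the largest index with $a_{1} \le \cdots \le a_{i}$. If $i = k$ we are done; otherwise maximality of $i$, together with comparability of $a_{i}, a_{i+1}$, forces $a_{i} > a_{i+1}$. If $a_{i} \ge a_{i+1} \ge \cdots \ge a_{k}$, then the path is unimodal with peak $i$; otherwise there is a smallest index $j \ge i$ with $a_{j} < a_{j+1}$, and necessarily $j \ge i+1$ and $a_{i} > a_{i+1} \ge \cdots \ge a_{j} < a_{j+1}$ by minimality of $j$. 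Let $q$ be the smallest index in $\{i+1,\dots,j\}$ with $a_{q} = a_{j}$. Weak monotonicity of $a_{i+1} \ge \cdots \ge a_{j}$ then gives $a_{q-1} > a_{q} = \cdots = a_{j}$, and combined with $a_{j} < a_{j+1}$ this shows that $v_{q-1} v_{q} \cdots v_{j+1}$ is a subpath of type (ii): it has at least three vertices because $j \ge q$, and it lies inside $v_{1}\cdots v_{k}$ because $q-1 \ge i \ge 1$ and $j+1 \le k$. This contradicts the hypothesis, so $a_{i} \ge \cdots \ge a_{k}$ and the path is unimodal.

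The whole argument is routine; the only step needing a little care is the last one, where one must verify that the plateau $a_{q} = \cdots = a_{j}$ is bounded below by a \emph{strict} inequality at its left end (using $q-1 \ge i$, together with $a_{i} > a_{i+1}$ when $q = i+1$ or the minimality of $q$ when $q > i+1$) and that the extracted subpath genuinely has at least three vertices and sits within the given path. I do not expect any substantive obstacle.
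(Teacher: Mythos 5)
Your proof is correct and follows essentially the same route as the paper: after excluding incomparable consecutive pairs, you locate a strict descent, the plateau ending at the first subsequent strict ascent, and extract a subpath of type (ii), which is exactly the index-chasing in the paper's argument (yours phrased directly, the paper's as a contrapositive, with a fuller case analysis for the easy direction). No gaps.
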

\begin{proof}
Since a path of type (\ref{prop:non-unimodal1}) or (\ref{prop:non-unimodal2}) is not unimodal, a path containing at least either one of these paths is not unimodal. 
We will prove the converse. 
Let $ v_{1} \cdots v_{k} $ be a non-unimodal path in $ (G, \psi) $. 
We may assume that there is no edge of type (\ref{prop:non-unimodal1}), i.e.,  $ \psi(v_{i}) $ and $ \psi(v_{i+1}) $ are comparable for every $ i \in \{1, \dots, k-1 \} $. 
Since the path $ v_{1} \cdots v_{k} $ is not unimodal, there exist the minimum index $ i_{0} \in \left\{ 1, \dots, k-2 \right\} $ satisfying $ \psi(v_{i_{0}}) > \psi(v_{i_{0}+1}) $ and the minimum index $ i_{2} \in \{i_{0}+2, \dots, k\} $ satisfying $ \psi(v_{i_{2}-1}) < \psi(v_{i_{2}}) $. 
Let $ i_{1} \in \{ i_{0}, \dots, i_{2}-2 \} $ be the maximum index satisfying $ \psi(v_{i_{1}}) > \psi(v_{i_{1}+1}) $. 
The maximality of $ i_{1} $ and the minimality of $ i_{2} $ imply $ \psi(v_{i_{1}+1}) = \dots = \psi(v_{i_{2}-1}) $. 
Hence the path $ v_{i_{1}} \cdots v_{i_{2}} $ is of type (\ref{prop:non-unimodal2}). 
\end{proof}

We will prove the following theorem, which is a generalization of Theorem \ref{thm:FG}. 
This theorem proves the equivalence $ (1) \Leftrightarrow (4) \Leftrightarrow (5) $ in Theorem \ref{thm:main1}. 

\begin{theorem}\label{thm:vwg}
Let $ (G, \psi) $ be a vertex-weighted graph. 
Then the following are equivalent: 
\begin{enumerate}
\item $ (G, \psi) $ has a weighted elimination ordering. 
\item $ G $ is chordal and does not contain induced paths in Proposition \ref{prop:non-unimodal}. 
\item $ G $ is chordal and every induced path is unimodal.
\end{enumerate} 
\end{theorem}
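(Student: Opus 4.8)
The plan is to establish the cycle of implications $(1) \Rightarrow (2) \Rightarrow (3) \Rightarrow (1)$, exploiting Proposition \ref{prop:non-unimodal} to treat (2) and (3) on essentially the same footing. The equivalence $(2) \Leftrightarrow (3)$ is immediate: a path is unimodal precisely when it avoids the two forbidden sub-paths of Proposition \ref{prop:non-unimodal}, so ``every induced path is unimodal'' and ``no induced path is of type (i) or (ii)'' say the same thing. Thus the real content is $(1) \Rightarrow (2)$ and $(3) \Rightarrow (1)$.

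For $(1) \Rightarrow (2)$: if $(G,\psi)$ has a weighted elimination ordering then in particular $G$ has a perfect elimination ordering, so $G$ is chordal by Theorem \ref{thm:FG}. It remains to rule out the two forbidden induced paths. Since a weighted elimination ordering restricts to one on every induced subgraph, it suffices to show directly that neither configuration in Proposition \ref{prop:non-unimodal} can itself carry a weighted elimination ordering. For type (i): an edge $\{v_1,v_2\}$ with $\psi(v_1),\psi(v_2)$ incomparable forces a failure, since a weighted elimination ordering of that edge would require $\psi$ of the first vertex to contain $\psi$ of the second, hence comparability. For type (ii): take a path $v_1\cdots v_k$ with $\psi(v_1)>\psi(v_2)=\dots=\psi(v_{k-1})<\psi(v_k)$; in any ordering of these $k$ vertices one of the two ``peaks'' $v_1$ or $v_k$ must be eliminated last among $\{v_1,v_2,v_{k-1},v_k\}$, but whichever endpoint comes last, its neighbor on the path has strictly larger $\psi$-value, violating the weight condition. (Here $k\ge 3$ and $v_2 = v_{k-1}$ when $k=3$, which the argument handles uniformly.)

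For $(3) \Rightarrow (1)$, which I expect to be the main obstacle: assume $G$ is chordal and every induced path is unimodal, and produce a weighted elimination ordering by induction on $\ell = |V_G|$. The natural strategy is to find a single vertex $v$ that is simplicial in $G$ \emph{and} satisfies $\psi(v) \subseteq \psi(u)$ for every neighbor $u$ of $v$; placing such a $v$ last and applying the inductive hypothesis to $(G - v, \psi)$ — whose hypotheses it inherits, since induced subgraphs of chordal graphs are chordal and induced paths of $G-v$ are induced paths of $G$ — then finishes. The delicate point is existence of such a $v$: chordality alone gives a simplicial vertex (indeed at least two, if $G$ is not complete), but one must choose among the simplicial vertices one whose weight is minimal relative to its neighborhood, and argue that the unimodality hypothesis prevents all simplicial vertices from being ``too heavy.'' The likely mechanism: if a simplicial vertex $v$ has a neighbor $u$ with $\psi(u)$ incomparable to $\psi(v)$, then since the neighborhood of $v$ is a clique this already produces a forbidden edge of type (i); if every neighbor's weight is comparable to $\psi(v)$ but some neighbor $u$ has $\psi(u) \subsetneq \psi(v)$, one traces a short induced path through $v$ that is not unimodal — assembling it from $u$, $v$, and a suitable vertex reached from $v$ on the ``light'' side, using chordality to keep the path induced — contradicting the hypothesis. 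Making this path-construction precise, and in particular guaranteeing that at least one simplicial vertex survives all these obstructions, is the crux; I would handle it by picking a simplicial vertex $v$ with $\psi(v)$ maximal among all simplicial vertices and showing that maximal choice forces $\psi(v) \subseteq \psi(u)$ for all neighbors $u$, again via a forbidden induced path if not.
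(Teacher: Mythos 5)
Your reduction of $(2)\Leftrightarrow(3)$ and your argument for $(1)\Rightarrow(2)$ agree in substance with the paper (minor quibble: in the type (ii) case the clean statement is that the \emph{last} vertex of any ordering must be simplicial in the path, hence an endpoint, while each endpoint is forced to \emph{precede} its lighter neighbour $v_{2}$, resp.\ $v_{k-1}$ --- your "last among $\{v_1,v_2,v_{k-1},v_k\}$ / strictly larger $\psi$-value" phrasing has the directions garbled, though it is easily repaired). The genuine gap is the crux of $(3)\Rightarrow(1)$. Your selection rule --- take a simplicial vertex $v$ with $\psi(v)$ \emph{maximal} among simplicial vertices and claim this forces $\psi(v)\subseteq\psi(u)$ for all neighbours $u$ --- is false. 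Take the path $v_{1}v_{2}v_{3}$ with $\psi(v_{1})=\{1,2\}$ and $\psi(v_{2})=\psi(v_{3})=\{1\}$: the graph is chordal, every induced path is unimodal, and $(v_{1},v_{2},v_{3})$ is a weighted elimination ordering, yet the simplicial vertex of maximal weight is $v_{1}$, whose unique neighbour satisfies $\psi(v_{2})\subsetneq\psi(v_{1})$. The same example shows why your fallback mechanism ("trace a short non-unimodal induced path through $v$") cannot close the gap: a simplicial vertex with a strictly lighter neighbour is \emph{not} by itself a forbidden configuration, so no contradiction can be extracted locally from $v$ and that neighbour; the only conclusion is that this particular $v$ cannot be placed last, and one must exhibit a \emph{different} simplicial vertex.

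The paper's existence argument is global and goes in the opposite direction. Let $S$ be a connected component of the set of vertices whose weight is minimal in $\psi(V_{G})$ (all weights on $S$ coincide, since adjacent weights are comparable), and let $N$ be the outside neighbourhood of $S$. If $u,v\in N$ were non-adjacent, a shortest $u$--$v$ path inside the subgraph induced by $V_{S}\cup\{u,v\}$ would be an induced path of type (ii) of Proposition \ref{prop:non-unimodal}; hence $N$ is a clique. Applying Dirac's lemma (Lemma \ref{lem:dirac}) to the subgraph induced by $V_{S}\cup N$, either it is complete or it has two non-adjacent simplicial vertices, at least one of which lies in $S$ because $N$ is a clique; that vertex is simplicial in $G$ (all its neighbours lie in $V_{S}\cup N$) and has globally minimal weight, so it may be placed last and induction finishes. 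Note also that the more plausible rule "minimal among simplicial vertices" is not a shortcut: to prove that such a vertex has no strictly lighter neighbour you essentially need the very statement that a simplicial vertex of globally minimal weight exists, i.e.\ the component-plus-Dirac argument above, so this step of the paper's proof is not avoidable by a purely local path-tracing argument.
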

In order to prove Theorem \ref{thm:vwg}, the following lemma is required. 
\begin{lemma}[Dirac {\cite[Theorem 4]{dirac1961rigid}}]\label{lem:dirac}
Every chordal graph is complete or has at least two non-adjacent simplicial vertices. 
\end{lemma}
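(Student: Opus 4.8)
The plan is to proceed by induction on the number of vertices $ \ell $. The base cases $ \ell \leq 2 $ are immediate, since every graph on at most two vertices is complete. For the inductive step, suppose $ G $ is chordal but not complete, so that there exist two non-adjacent vertices $ a, b \in V_{G} $. Fix a minimal set $ S \subseteq V_{G} $ whose removal separates $ a $ from $ b $, and let $ A $ and $ B $ be the vertex sets of the connected components of $ G \setminus S $ containing $ a $ and $ b $, respectively.

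The crucial structural input is that $ S $ induces a complete subgraph of $ G $. To see this, take any two vertices $ u, w \in S $. By minimality of $ S $, each of $ u $ and $ w $ has a neighbor in $ A $ and a neighbor in $ B $; choosing a shortest path from $ u $ to $ w $ with interior in $ A $ and a shortest path with interior in $ B $, and concatenating them, yields a cycle. Neither shortest path admits a chord among its own vertices, and no interior vertex on the $ A $-side can be adjacent to one on the $ B $-side since they lie in different components of $ G \setminus S $. Hence if $ u $ and $ w $ were non-adjacent, this cycle would have length at least four with no chord, contradicting chordality; therefore $ \{u, w\} \in E_{G} $.

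Next I would apply the induction hypothesis to the two chordal induced subgraphs $ G_{A} \coloneqq G[A \cup S] $ and $ G_{B} \coloneqq G[B \cup S] $, which are strictly smaller than $ G $ because $ b \notin A \cup S $ and $ a \notin B \cup S $. I claim that $ G_{A} $ has a simplicial vertex lying in $ A $: if $ G_{A} $ is complete then every vertex of $ A $ is simplicial in $ G_{A} $; otherwise $ G_{A} $ has two non-adjacent simplicial vertices, and since $ S $ is a clique at most one of them lies in $ S $, so at least one lies in $ A $. Either way we obtain $ s_{A} \in A $ simplicial in $ G_{A} $, and symmetrically $ s_{B} \in B $ simplicial in $ G_{B} $.

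Finally I would check that $ s_{A} $ and $ s_{B} $ are the required pair. Every neighbor of $ s_{A} $ in $ G $ lies in $ A \cup S $, because $ s_{A} $ cannot reach any other component of $ G \setminus S $ without passing through $ S $; thus the neighborhood of $ s_{A} $ in $ G $ coincides with its neighborhood in $ G_{A} $, which is a clique, and so $ s_{A} $ is simplicial in $ G $ as well, and likewise $ s_{B} $. Since $ s_{A} \in A $ and $ s_{B} \in B $ belong to distinct components of $ G \setminus S $, they are non-adjacent, giving the desired two non-adjacent simplicial vertices. The main obstacle is establishing the clique structure of the minimal separator $ S $; once that is secured, the induction closes cleanly.
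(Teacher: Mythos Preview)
Your argument is correct and is essentially Dirac's classical proof via minimal vertex separators. Note, however, that the paper does not supply its own proof of this lemma: it merely cites Dirac \cite[Theorem~4]{dirac1961rigid} and uses the statement as a black box in the proof of Theorem~\ref{thm:vwg}. So there is nothing in the paper to compare against; what you have written is the standard textbook proof, and it stands on its own.
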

\begin{proof}[Proof of Theorem \ref{thm:vwg}]
The equivalence $ (2) \Leftrightarrow (3) $ is clear from Proposition \ref{prop:non-unimodal}. 
In order to prove $ (1) \Rightarrow (2) $, 
assume that $ (G, \psi) $ has a weighted elimination ordering. 
Then $ G $ is chordal by Theorem \ref{thm:FG}. 
Suppose that $ (G, \psi) $ has a path mentioned in Proposition \ref{prop:non-unimodal} as an induced path. 
This path has no weighted elimination ordering, which is a contradiction since every induced subgraph of $ (G, \psi) $ has a weighted elimination ordering. 
Therefore $ G $ has no induced paths in Proposition \ref{prop:non-unimodal}. 

To prove $ (2) \Rightarrow (1) $ suppose that $ G $ is chordal and does not contain induced paths in Proposition \ref{prop:non-unimodal}. 
Then $ \psi(u) $ and $ \psi(v) $ are comparable for every edge $ \{u, v\} \in E_{G} $. 
We will prove that $ (G, \psi) $ has a weighted elimination ordering by induction on $ \ell $. 
If $ \ell = 1 $ then the only ordering is a weighted elimination ordering. 
Assume that $ \ell \geq 2 $. 
It suffices to show that there exists a simplicial vertex $ v $ in $ G $ such that $ \psi(v) \leq \psi(u) $ for any neighbor $ u $ of $ v $ since $ G \setminus \{ v \} $ has a weighted elimination ordering by the induction hypothesis. 
Let $ S $ be a connected component of the subgraph of $ G $ induced by the vertices
\begin{align*}
\left\{ v \in V_{G} \mid  \psi(v) \text{ is minimal in } \psi(V_{G}) \right\}
\end{align*}
Note that for any vertices $ u,v \in V_{S} $ we have that $ \psi(u) = \psi(v) $ by the minimality. 
Let 
\begin{align*}
N \coloneqq \left\{ v \in V_{G}\setminus V_{S} \mid \{ v, u \} \in E_{G} \text{ for some } u \in V_{S} \right\}. 
\end{align*}
We will prove that $ N $ is a clique of $ G $. 
If $ N $ were not a clique then there are non-adjacent vertices $ u, v \in N $. 
For any $ w \in  V_{S} $, we have that $ \psi(u) > \psi(w) < \psi(v) $ by the minimality and comparability. 
Hence a shortest path from $ u $ to $ v $ in the induced subgraph $ S \cup \{u, v\} $ is a path of type (\ref{prop:non-unimodal2}) in Proposition \ref{prop:non-unimodal}, which is a contradiction. 
Thus $ N $ is a clique. 

Let $ F $ be the subgraph of $ G $ induced by $ V_{S} \cup N $. 
Since $ G $ is chordal, $ F $ is also chordal. 
Now we will prove that there exists a vertex $ v $ of $ S $ which is simplicial in $ F $. 
If $ F $ is complete, then every vertex of $ F $ is simplicial in $ F $. 
In particular, every vertex of $ S $ is simplicial in $ F $. 
If $ F $ is not complete, then by Lemma \ref{lem:dirac} there exist two non-adjacent vertices of $ F $ which are simplicial in $ F $. 
Since $ N $ is a clique, at least one of the vertices belongs to $ S $. 
Thus in the both cases, $ S $ has a vertex $ v $ which is simplicial in $ F $. 

Since every neighbor of $ v $ in $ G $ belongs to $ F $, we have that $ v $ is also simplicial in $ G $. 
This is a desired simplicial vertex. 
\end{proof}

\section{Relation to $ N $-Ish arrangements}\label{sec:N-Ish}
Abe and the authors \cite{abe2014freeness} introduced $ N $-Ish arrangements, which are deformation of braid arrangements, to state the sharing property of the freeness of deleted Shi arrangements and deleted Ish arrangements. 

Let $ N = (N_{1}, \dots, N_{\ell}) $ be a tuple of finite subsets in $ \mathbb{K} $. 
The tuple $ N $ is a \textbf{nest} if there exists a permutation $ w $ of $ \{1, \dots, \ell\} $ such that $ N_{w(1)} \subseteq N_{w(2)} \subseteq \dots \subseteq N_{w(\ell)} $. 
Define the $ N $-Ish arrangement $ \mathcal{I}_{N} $ by 
\begin{multline*}
\mathcal{I}_{N} \coloneqq \left\{ \{ z=0 \} \right\} \cup \left\{ \{x_{i}-x_{j} = 0 \} \mid 1 \leq i < j \leq \ell  \right\} \\
\cup \left\{ \{x_{0}-x_{i} = az \} \mid 1 \leq i \leq \ell, \, a \in N_{i}   \right\}, 
\end{multline*}
where $ z, x_{0}, \dots, x_{\ell} $ are coordinates of an $ (\ell+2) $-dimensional vector space over $ \mathbb{K} $. 

\begin{theorem}[{\cite[Theorem 1.3]{abe2014freeness}}]\label{thm:Ish}
The following conditions are equivalent: 
\begin{enumerate}
\item $ N $ is a nest. 
\item $ \mathcal{I}_{N} $ is supersolvable. 
\item $ \mathcal{I}_{N} $ is free. 
\end{enumerate}
\end{theorem}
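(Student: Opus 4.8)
The plan is to deduce Theorem~\ref{thm:Ish} from the complete-graph case of Theorem~\ref{thm:main1} by an explicit change of coordinates. First I would set $y_{i} := x_{0}-x_{i}$ for $1 \le i \le \ell$ and keep $z, x_{0}$. This is a linear automorphism of $V^{\ast}$, and under it the defining forms transform as $z \mapsto z$, $x_{i}-x_{j} \mapsto y_{j}-y_{i}$, and $x_{0}-x_{i}-az \mapsto y_{i}-az$. In particular the coordinate $x_{0}$ disappears from every defining form, so $\mathcal{I}_{N}$ is, up to this isomorphism, the product of a line (with coordinate $x_{0}$ and no hyperplane) with the arrangement
\begin{align*}
\mathcal{B} := \left\{ \{z=0\} \right\} \cup \left\{ \{y_{i}-y_{j}=0\} \mid 1 \le i<j \le \ell \right\} \cup \left\{ \{y_{i}=az\} \mid 1 \le i \le \ell,\ a \in N_{i} \right\}
\end{align*}
living in the $(\ell+1)$-dimensional space with coordinates $z, y_{1}, \dots, y_{\ell}$. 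Comparing with the definition in Section~\ref{sec:introduction}, $\mathcal{B}$ is precisely the $\psi$-graphical arrangement $\mathcal{A}_{K_{\ell},\psi}$ of the complete graph $K_{\ell}$ on $\{1,\dots,\ell\}$ with weight map $\psi(i)=N_{i}$ (note $|N_{i}|<\infty$, as required). Here I would check that no hyperplanes collide or are lost under the substitution, and that the degenerate case $N=(\emptyset,\dots,\emptyset)$ poses no problem (there $N$ is trivially a nest and $\mathcal{B}$ is a braid arrangement with an extra coordinate hyperplane, which is supersolvable).

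Next I would invoke the standard fact that adjoining a free coordinate affects neither supersolvability nor freeness: the intersection lattices satisfy $L(\mathcal{I}_{N}) \cong L(\mathcal{B})$, so $\mathcal{I}_{N}$ is supersolvable if and only if $\mathcal{B}$ is; and $\mathcal{I}_{N}$ is free if and only if $\mathcal{B}$ is, the exponents gaining a single $0$ (see \cite{orlik1992arrangements}). Consequently the three conditions in Theorem~\ref{thm:Ish} are equivalent to the respective conditions for $\mathcal{A}_{K_{\ell},\psi}$.

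Finally I would apply Theorem~\ref{thm:main1} to $\mathcal{A}_{K_{\ell},\psi}$. The graph $K_{\ell}$ is chordal, and every induced subgraph of $K_{\ell}$ is again complete, so the only induced paths in $K_{\ell}$ are single vertices and single edges. Hence condition~(5) of Theorem~\ref{thm:main1} for $(K_{\ell},\psi)$ amounts to: every edge is unimodal, i.e.\ $N_{i}$ and $N_{j}$ are comparable for all $i,j$, i.e.\ $\{N_{1},\dots,N_{\ell}\}$ is a chain under inclusion. A finite multiset of pairwise comparable sets can be listed in nondecreasing order, so this is exactly the statement that $N$ is a nest. Combining this with the equivalence of conditions (2), (3), and (5) in Theorem~\ref{thm:main1} yields the three equivalences asserted in Theorem~\ref{thm:Ish}.

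The only genuine content beyond bookkeeping is the reduction step, so the part I expect to need the most care is verifying that the change of variables produces a \emph{literal} product decomposition $\mathcal{I}_{N} \cong \mathcal{A}_{K_{\ell},\psi} \times (\text{line})$; once that is in place the theorem follows immediately from Theorem~\ref{thm:main1}. If one instead wished to avoid citing Theorem~\ref{thm:main1}, one could argue directly: for $(1)\Rightarrow(2)$, order the indices so that $N_{1}\subseteq\dots\subseteq N_{\ell}$ and build a maximal chain of modular flats of $L(\mathcal{I}_{N})$ by adding hyperplanes in a compatible order; $(2)\Rightarrow(3)$ is the general fact that supersolvable arrangements are free; and for the contrapositive of $(3)\Rightarrow(1)$ one localizes $\mathcal{I}_{N}$ at a suitable flat to reduce to a small non-free configuration arising from two incomparable $N_{i},N_{j}$. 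The reduction above is, however, shorter and reuses the main theorem of the paper directly.
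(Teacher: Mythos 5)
Your reduction of $\mathcal{I}_{N}$ to the complete-graph $\psi$-graphical arrangement via $y_{i}=x_{0}-x_{i}$ is correct (it is exactly Proposition~\ref{prop:afequiv} of the paper), and the directions nest $\Rightarrow$ supersolvable $\Rightarrow$ free do come out of this non-circularly, since Theorem~\ref{thm:main1} $(1)\Rightarrow(2)\Rightarrow(3)$ rests only on Theorem~\ref{thm:MS1} (or the explicit basis of Section~\ref{sec:basis}), neither of which uses Theorem~\ref{thm:Ish}. The problem is the essential direction, free $\Rightarrow$ nest. There you invoke Theorem~\ref{thm:main1} $(3)\Rightarrow(5)$ (equivalently $(3)\Rightarrow(4)$), but within this paper that implication is proved through Lemma~\ref{lem:nonfreepath} and Lemma~\ref{lem:MS2}, and the paper's proof of Lemma~\ref{lem:MS2} is precisely an application of Theorem~\ref{thm:Ish} (combined with Proposition~\ref{prop:local} and Proposition~\ref{prop:afequiv}). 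For a complete graph the only possible obstruction in condition (4)/(5) is an edge with incomparable weights, so the instance of Lemma~\ref{lem:MS2} your argument relies on is essentially the two-set case of Theorem~\ref{thm:Ish} itself: the argument assumes what it is supposed to prove. In the paper, Theorem~\ref{thm:Ish} is not proved but imported from \cite{abe2014freeness} (with the remark that the proof there is characteristic-free); it is an ingredient of Theorem~\ref{thm:main1}, not a consequence of it, so deriving it from Theorem~\ref{thm:main1} reverses the logical order.

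Your fallback sketch does not close this gap. Saying that for $(3)\Rightarrow(1)$ one ``localizes at a suitable flat to reduce to a small non-free configuration arising from two incomparable $N_{i},N_{j}$'' leaves unproved exactly the key fact: that this small configuration (the $N$-Ish arrangement with $\ell=2$ and $N_{1},N_{2}$ incomparable, equivalently $\mathcal{A}_{K_{2},\psi}$ with incomparable weights) is not free. That non-freeness is the whole content of the hard direction and requires a genuine argument, e.g.\ an addition--deletion and exponent-comparison computation in the spirit of the base case of Lemma~\ref{lem:nonfreepath} but with an independent starting point, which is what \cite{abe2014freeness} supplies. To make your proposal a real proof you would either have to reproduce such an argument for the incomparable two-set case, or accept that Theorem~\ref{thm:Ish} must be quoted from \cite{abe2014freeness}, as the paper does.
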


In \cite{abe2014freeness}, this theorem is formulated for fields of characteristic $0$.
However, the proof is independent of the field. 

\begin{proposition}\label{prop:afequiv}
Let $ G $ be the complete graph with $ \ell $ vertices $ \{1, \dots, \ell\} $ and $ N = (N_{1}, \dots, N_{\ell}) $ a tuple as above. 
Define a map $ \psi \colon V_{G} \rightarrow 2^{\mathbb{K}} $ by $ \psi(i) = N_{i} $ for every $ i \in V_{G} $. 
Then $ \mathcal{A}_{G, \psi} \times \varnothing_{1} $ and $ \mathcal{I}_{N} $ are affinely equivalent, where $ \varnothing_{1} $ denotes the $ 1 $-dimensional empty arrangement. 
\end{proposition}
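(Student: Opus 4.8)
The plan is to exhibit an explicit affine isomorphism between the ambient spaces that carries one arrangement onto the other. Recall that $\mathcal{A}_{G,\psi}$ lives in the $(\ell+1)$-dimensional space with coordinates $z, x_1, \dots, x_\ell$, so $\mathcal{A}_{G,\psi} \times \varnothing_1$ lives in an $(\ell+2)$-dimensional space with one extra free coordinate, say $y$; and $\mathcal{I}_N$ lives in the $(\ell+2)$-dimensional space with coordinates $z, x_0, x_1, \dots, x_\ell$. The natural guess is the linear change of coordinates $x_i \mapsto x_0 - x_i$ for $1 \le i \le \ell$, fixing $z$, and matching the extra free coordinate $y$ with $x_0$ (since $x_0$ appears in no hyperplane of $\mathcal{A}_{G,\psi} \times \varnothing_1$ after translation, it plays the role of the empty factor). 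More precisely, I would define $\varphi$ on linear forms by $\varphi(z) = z$, $\varphi(x_i) = x_0 - x_i$ for $i \ge 1$, and $\varphi(y) = x_0$; this is invertible since $x_0 = \varphi(y)$ and $x_i = \varphi(y) - \varphi(x_i) = \varphi(y - x_i)$, so it induces an affine (indeed linear) isomorphism of the underlying vector spaces.

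Next I would check that $\varphi$ sends each defining hyperplane of $\mathcal{A}_{G,\psi}\times\varnothing_1$ to one of $\mathcal{I}_N$ and vice versa. The hyperplane $\{z = 0\}$ goes to $\{z=0\}$. For an edge $\{i,j\}$ of $G$ (and $G$ is complete, so all pairs $1 \le i < j \le \ell$ occur), the hyperplane $\{x_i - x_j = 0\}$ is sent to $\{(x_0 - x_i) - (x_0 - x_j) = 0\} = \{x_j - x_i = 0\}$, which is exactly the braid hyperplane in $\mathcal{I}_N$. For a hyperplane $\{x_i = a z\}$ with $a \in \psi(i) = N_i$, we get $\{x_0 - x_i = a z\}$, which is precisely the Ish-type hyperplane of $\mathcal{I}_N$. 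The empty factor $\varnothing_1$ contributes no hyperplanes, consistent with $x_0$ not appearing alone in any hyperplane of $\mathcal{I}_N$ — wait, it does appear, so the correct bookkeeping is that the coordinate $y$ of the empty factor corresponds under $\varphi^{-1}$ to a coordinate that is genuinely free for $\mathcal{A}_{G,\psi}$, namely after the substitution the combination $x_0$ is the image of $y$ and no hyperplane of $\mathcal{A}_{G,\psi}$ involves $y$; the point is simply that the two arrangements, viewed as multisets of hyperplanes in isomorphic affine spaces, correspond bijectively under $\varphi$, and that is what affine equivalence means. This correspondence is clearly a bijection on hyperplanes since both families are matched term by term.

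The only genuinely delicate point is the dimension bookkeeping: one must take the product with $\varnothing_1$ on the left-hand side to match the dimension $\ell+2$ of the ambient space of $\mathcal{I}_N$, and one must verify that under $\varphi$ the "extra" coordinate on the $\mathcal{A}_{G,\psi}$ side (the empty-arrangement direction) is genuinely not constrained — equivalently that $x_0$ can be chosen as the image of the free coordinate. This is immediate from the formula for $\varphi^{-1}$ above. I expect no real obstacle here; the proof is a short verification, and the main thing to be careful about is writing the coordinate change in the correct direction and confirming that the edge hyperplanes of the complete graph exhaust precisely the braid hyperplanes $\{x_i - x_j = 0 \mid 1 \le i < j \le \ell\}$ appearing in $\mathcal{I}_N$.
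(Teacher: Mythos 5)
Your proof is correct and matches the paper's argument, which uses exactly the same change of coordinates $x_i \mapsto x_0 - x_i$ ($1 \le i \le \ell$), $z \mapsto z$, with $x_0$ serving as the free coordinate of the $\varnothing_1$ factor; the hyperplane-by-hyperplane verification you give is the intended (and in the paper, omitted) routine check.
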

\begin{proof}
The change of coordinates $ x_{i} \mapsto x_{0}-x_{i} \, (1 \leq i \leq \ell), x_{0} \mapsto x_{0}, z \mapsto z  $ induces the equivalence. 
\end{proof}

Mu and Stanley stated the following lemma without proof. 

\begin{lemma}[Mu-Stanley {\cite[Theorem 3]{mu2015supersolvability}}]\label{lem:MS2}
If a $ \psi $-graphical arrangement $ \mathcal{A}_{G, \psi} $ is free then $ \psi(i) \subseteq \psi(j) $ or $ \psi(i) \supseteq \psi(j) $ for all $ \{i, j\} \in E_{G} $. 
\end{lemma}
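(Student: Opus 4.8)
The plan is to prove the contrapositive, reducing the general case to the Ish-arrangement result quoted in Theorem~\ref{thm:Ish}. Suppose some edge $\{i,j\} \in E_G$ has $\psi(i)$ and $\psi(j)$ incomparable; I want to show $\mathcal{A}_{G,\psi}$ is not free. The first step is a \emph{localization} argument: freeness is preserved under taking the localization $\mathcal{A}_X$ at any $X \in L(\mathcal{A}_{G,\psi})$, so if I can exhibit a flat $X$ whose localization is not free, I am done. The natural candidate is to contract $G$ down to a small witness. Concretely, I would consider the restriction of the data to the two-vertex subgraph on $\{i,j\}$ — but a subarrangement of a free arrangement need not be free, so instead I work with a localization: intersect all the hyperplanes $\{x_k = x_i\}$ and $\{x_k = az\}$ forcing every coordinate $x_k$ with $k \neq i,j$ to be "collapsed" appropriately onto the $z$-line, landing in a rank-small flat whose localization is exactly (affinely equivalent to) the $\psi$-graphical arrangement of the single edge $\{i,j\}$ with weights $\psi(i),\psi(j)$, possibly together with the hyperplane $\{z=0\}$ and a complementary empty arrangement.

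The second step is to analyze that two-vertex case directly. By Proposition~\ref{prop:afequiv} with $\ell = 2$, $\mathcal{A}_{K_2,\psi} \times \varnothing_1$ is affinely equivalent to $\mathcal{I}_{N}$ where $N = (\psi(i), \psi(j))$. A product with an empty arrangement, and affine equivalence, both preserve freeness. By Theorem~\ref{thm:Ish}, $\mathcal{I}_N$ is free if and only if $N$ is a nest, i.e.\ if and only if $\psi(i) \subseteq \psi(j)$ or $\psi(j) \subseteq \psi(i)$. Since we assumed $\psi(i)$ and $\psi(j)$ are incomparable, $\mathcal{I}_N$ is not free, hence $\mathcal{A}_{K_2,\psi}$ is not free, hence its localization-preimage inside $\mathcal{A}_{G,\psi}$ forces $\mathcal{A}_{G,\psi}$ to be non-free. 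Taking the contrapositive gives the lemma.

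The main obstacle will be making the localization step rigorous: I need to verify that there genuinely is a flat $X \in L(\mathcal{A}_{G,\psi})$ whose localization $(\mathcal{A}_{G,\psi})_X$ is affinely (or linearly) equivalent to $\mathcal{A}_{K_2,\psi'} \times \varnothing_m$ for the appropriate $m$, where $\psi'$ is $\psi$ restricted to $\{i,j\}$. This requires choosing, for each vertex $k \neq i,j$, either a hyperplane $\{x_k = az\}$ with $a \in \psi(k)$ or — if $\psi(k)$ is empty — a hyperplane $\{x_k = x_m\}$ along a spanning structure; one must check that the chosen hyperplanes are consistent (their intersection is a genuine flat of the right rank) and that on that flat only the two hyperplanes coming from $\psi(i)$ versus $\psi(j)$ across the edge $\{i,j\}$ survive in a non-degenerate way. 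A subtlety is that $\psi(k)$ could be empty for some vertices, so the reduction must route through edges of $G$; here one uses that the edge $\{i,j\}$ is present, and one only needs to reach $i$ and $j$ themselves. An alternative, perhaps cleaner, route avoids localization entirely: take the induced subgraph of $G$ on $\{i,j\}$ together with enough of $G$ to stay an induced subgraph, observe that $\mathcal{A}_{G,\psi}$ restricted to a generic point already displays the two-vertex behaviour, and invoke Theorem~\ref{thm:Ish} via Proposition~\ref{prop:afequiv}; but since restriction to an induced subgraph is \emph{not} in general a localization, the localization framing is the safe one, and I would present that.
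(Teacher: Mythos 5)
Your overall strategy is exactly the paper's: reduce to the two\--vertex induced subgraph on $\{i,j\}$, identify that arrangement with the $N$-Ish arrangement $\mathcal{I}_{N}$, $N=(\psi(i),\psi(j))$, via Proposition~\ref{prop:afequiv}, and invoke Theorem~\ref{thm:Ish} to force $N$ to be a nest (the paper phrases it directly rather than contrapositively, which is cosmetic). However, the step you yourself flag as the main obstacle is genuinely broken as you describe it. By definition $\mathcal{A}_{X}=\{H\in\mathcal{A}\mid H\supseteq X\}$, so \emph{every} hyperplane you intersect to produce the flat $X$ automatically lies in the localization $\mathcal{A}_{X}$. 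Hence choosing, for each $k\neq i,j$, a hyperplane $\{x_{k}=az\}$ or $\{x_{k}=x_{m}\}$ and intersecting them ``to collapse the other coordinates'' can never yield a localization equivalent to $\mathcal{A}_{K_{2},\psi'}\times\varnothing_{m}$: all those auxiliary hyperplanes (and typically more) survive in $\mathcal{A}_{X}$. What you are describing is in effect a restriction/specialization, which does not preserve freeness in the direction you need, so the plan as written does not close.

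The fix is much simpler and is precisely the paper's Proposition~\ref{prop:local}: take $X$ to be the intersection of the hyperplanes of the two-vertex subarrangement itself, i.e.\ of $\{z=0\}$, $\{x_{i}-x_{j}=0\}$, $\{x_{i}=az\}$ $(a\in\psi(i))$ and $\{x_{j}=az\}$ $(a\in\psi(j))$. Since $\psi(i)$ and $\psi(j)$ are incomparable they are both nonempty, so $X=\{z=x_{i}=x_{j}=0\}$, on which all coordinates $x_{k}$ with $k\neq i,j$ are unconstrained; consequently no hyperplane $\{x_{s}-x_{t}=0\}$ with $\{s,t\}\neq\{i,j\}$ and no $\{x_{s}=az\}$ with $s\notin\{i,j\}$ contains $X$, and $(\mathcal{A}_{G,\psi})_{X}$ is exactly the $\psi_{S}$-graphical arrangement of the induced edge $S=\{i,j\}$. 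In particular your closing worry is misplaced: for $\psi$-graphical arrangements the arrangement of any induced subgraph \emph{is} a localization (that is the content of Proposition~\ref{prop:local}), and no issue with vertices having $\psi(k)=\varnothing$ arises because those vertices simply play no role in defining $X$. With this correction, the remainder of your argument (freeness of localizations, invariance of freeness under adding an empty factor and affine equivalence, then Theorem~\ref{thm:Ish}) is exactly the published proof.
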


In the rest of this section, we will give a proof of Lemma \ref{lem:MS2} with the characterization of the freeness of $ N $-Ish arrangements. 

A subarrangement $ \mathcal{B} $ of an arrangement $ \mathcal{A} $ is called a \textbf{localization} if 
\begin{align*}
\mathcal{B} = \mathcal{A}_{X} \coloneqq \left\{ H \in \mathcal{A} \mid H \supseteq X \right\}
\end{align*}
for some $ X \in L(\mathcal{A}) $.
It is well known that every localization of a free arrangement is also free (see, for example, \cite[Theorem 4.37]{orlik1992arrangements}). 

\begin{proposition}\label{prop:local}
For a $ \psi $-graphical arrangement $ \mathcal{A}_{G, \psi} $, 
let $ S=(V_{S}, E_{S}) $ be an induced subgraph of $ G $ and $ \psi_{S} $ the restriction of $ \psi $ to $ V_{S} $. 
Then the $ \psi_{S} $-graphical arrangement $ \mathcal{A}_{S, \psi_{S}} $ and the graphical arrangement $ \mathcal{A}_{S} $ are localizations of $ \mathcal{A}_{G, \psi} $. 
\end{proposition}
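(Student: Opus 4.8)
The plan is to exhibit two flats $X,Y\in L(\mathcal{A}_{G,\psi})$ whose localizations are, on the nose, the two arrangements in question: I want $X$ with $(\mathcal{A}_{G,\psi})_X=\mathcal{A}_{S,\psi_S}$ and $Y$ with $(\mathcal{A}_{G,\psi})_Y=\mathcal{A}_S$, all regarded as sets of hyperplanes in the ambient space $V$.

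For $\mathcal{A}_S$ I would take $Y:=\bigcap_{\{i,j\}\in E_S}\{x_i-x_j=0\}$, an intersection of hyperplanes of $\mathcal{A}_{G,\psi}$ and hence an element of $L(\mathcal{A}_{G,\psi})$. On $Y$ the linear form $z$ and every coordinate $x_k$ with $k\notin V_S$ are free, while for $i,j\in V_S$ one has $x_i=x_j$ on $Y$ exactly when $i$ and $j$ lie in the same connected component of $S$. Then I would run through the hyperplanes of $\mathcal{A}_{G,\psi}$: the hyperplane $\{z=0\}$ and all hyperplanes $\{x_i=az\}$ fail to contain $Y$ because $z$ is free on $Y$; and for $\{i,j\}\in E_G$ the hyperplane $\{x_i-x_j=0\}$ contains $Y$ precisely when $i,j$ lie in one component of $S$, which — since $S$ is an induced subgraph, so any edge of $G$ between two vertices of $V_S$ already belongs to $E_S$ — happens exactly when $\{i,j\}\in E_S$. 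This gives $(\mathcal{A}_{G,\psi})_Y=\mathcal{A}_S$.

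For $\mathcal{A}_{S,\psi_S}$ the right flat is $X:=\{z=0\}\cap\bigcap_{\{i,j\}\in E_S}\{x_i-x_j=0\}\cap\bigcap_{i\in V_S,\,a\in\psi(i)}\{x_i=az\}$, again a flat of $\mathcal{A}_{G,\psi}$. On $X$ one has $z=0$; moreover $x_i=0$ for each $i\in V_S$ whose component of $S$ contains a vertex of nonempty weight, the remaining $V_S$-coordinates are free (one per component), and the coordinates outside $V_S$ are free. Checking hyperplanes once more: $\{z=0\}\supseteq X$ trivially; for $\{i,j\}\in E_G$ the relation $\{x_i-x_j=0\}\supseteq X$ forces $x_i=x_j$ on $X$, which among such pairs holds exactly for $\{i,j\}\in E_S$ (again by inducedness); and $\{x_i=az\}\supseteq X$ amounts to $x_i$ vanishing on $X$, which holds when $i\in V_S$ and $a\in\psi(i)=\psi_S(i)$ and fails when $i\notin V_S$ (where $x_i$ is free). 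Hence $(\mathcal{A}_{G,\psi})_X=\mathcal{A}_{S,\psi_S}$.

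The computations are routine bookkeeping; the one substantive ingredient is the use of the induced-subgraph hypothesis to prevent a hyperplane $\{x_i-x_j=0\}$ with $i,j\in V_S$ but $\{i,j\}\notin E_S$ from sneaking into the localization (such an edge simply is not present in $E_G$). The only point requiring a little care in the analysis of $X$ is to separate components of $S$ consisting entirely of zero-weight vertices, which stay free, from components meeting a vertex of nonempty weight, which collapse to the origin; beyond getting this distinction right I anticipate no obstacle.
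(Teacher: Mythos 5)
Your proposal is correct and follows essentially the same route as the paper: in both cases the flat is taken to be the common intersection of the hyperplanes of $\mathcal{A}_{S,\psi_{S}}$ (respectively $\mathcal{A}_{S}$), and inducedness of $S$ is used to rule out stray hyperplanes $\{x_{i}-x_{j}=0\}$ entering the localization. The only difference is cosmetic: the paper checks containment by exhibiting an explicit witness vector in the flat, while you describe which coordinates are forced or free on the flat and check vanishing of the defining linear forms directly, which is equally valid.
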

\begin{proof}
First we will prove that $ \mathcal{A}_{S. \psi_{S}} $ is a localization of $ \mathcal{A}_{G,\psi} $. 
Let $ X \coloneqq \bigcap_{H \in \mathcal{A}_{S, \psi_{S}}} H \in L(\mathcal{A}_{G, \psi}) $.
We will show that $ \mathcal{A}_{S, \psi_{S}} = \mathcal{A}_{X} $. 
The inclusion $ \mathcal{A}_{S, \psi_{S}} \subseteq \mathcal{A}_{X} $ is trivial. 
We will show the converse $ \mathcal{A}_{S, \psi_{S}} \supseteq \mathcal{A}_{X} $. 
Take the vector $ v \in X $ such that 
\begin{align*}
z(v) = 0, \quad
x_{i}(v) = \begin{cases}
0 & \text{ if } i \in V_{S}.  \\
i & \text{ otherwise. }
\end{cases}
\end{align*}
Every hyperplane in $ \mathcal{A}_{X} $ must contain the vector $ v $. 
The arrangement $ \mathcal{A}_{G, \psi} $ consists of hyperplanes of three types, $ \{z=0\}, \{x_{i}-x_{j}=0 \} $ and $ \{ x_{i}=az \} $. 
The hyperplane $ \{z=0\} $ is in the both of $ \mathcal{A}_{S, \psi_{S}} $ and $ \mathcal{A}_{X} $. 
If $ \{x_{i}-x_{j}=0\} \in \mathcal{A}_{X} $ then $ x_{i}(v) = x_{j}(v) = 0 $. 
Hence $ i, j \in V_{S} $ and $ \{i, j\} \in E_{S} $ since $ S $ is an induced graph of $ G $. 
Therefore $ \{x_{i}-x_{j} = 0 \} \in \mathcal{A}_{S, \psi_{S}} $. 
If $ \{x_{i} = az \} \in \mathcal{A}_{X} $ then $ x_{i}(v) = az(v) = 0 $. 
Therefore $ i \in V_{S} $ and $ \{x_{i} = az \} \in \mathcal{A}_{S, \psi_{S}} $. 
Hence $ \mathcal{A}_{S, \psi_{S}} \supseteq \mathcal{A}_{X} $. 
Thus $ \mathcal{A}_{S, \psi_{S}} $ is a localization of $ \mathcal{A}_{G, \psi} $. 

To verify that $ \mathcal{A}_{S} $ is a localization, take the vector $ v \in X \coloneqq \bigcap_{H \in \mathcal{A}_{S}} \in L(\mathcal{A}_{G, \psi}) $ satisfying
\begin{align*}
z(v) = -1, \quad 
x_{i}(v) = \begin{cases}
0 & \text{ if } i \in V_{S}. \\
i & \text{ otherwise. }
\end{cases}
\end{align*}
By the similar argument we have $ \mathcal{A}_{S} = \mathcal{A}_{X} $. 
Hence $ \mathcal{A}_{S} $ is a localization. 
\end{proof}

\begin{proof}[Proof of Lemma \ref{lem:MS2}]
Suppose that $ \mathcal{A}_{G, \psi} $ is free and $ \{ i, j \} \in E_{G} $. 
Let $ S $ be the subgraph induced by vertices $ \{ i, j \} $. 
By Proposition \ref{prop:local},  $ \mathcal{A}_{S, \psi_{S}} $ is a localization of $ \mathcal{A}_{G, \psi} $ hence free. 
By Proposition \ref{prop:afequiv}, the arrangement $ \mathcal{A}_{S, \psi_{S}} $ is affinely equivalent to the $ N $-Ish arrangement $ \mathcal{I}_{N} $, where $ N = (\psi(i), \psi(j)) $. 
Theorem \ref{thm:Ish} asserts that $ N $ is a nest. 
Therefore $ \psi(i) \subseteq \psi(j) $ or $ \psi(i) \supseteq \psi(j) $. 
\end{proof}

\section{Basis Construction}\label{sec:basis}
In this section, we construct a basis for 
the logarithmic derivation module of $\mathcal{A}_{G,\psi}$ when $ (G,\psi) $ has a weighted elimination ordering. 

\begin{theorem}
\label{basis}
Suppose that $(G,\psi)$ has a weighted elimination ordering $ (v_{1},\ldots,v_{\ell}) $.
We may assume that $x_{1},\ldots,x_{\ell}$ are the coordinates corresponding to $ v_{1},\ldots,v_{\ell} $.
Define the set $C_{\geq k}$ by
\begin{align*}
C_{\geq k} &\coloneqq
\{ i \mid k < i \leq \ell 
\text{ and there exists a path } 
v_{k}v_{j_{1}}v_{j_{2}} \cdots v_{j_{n}} v_{i} \\
&\hspace{8em} 
\text{ such that }
k < j_{1} < j_{2} < \cdots <j_{n} < i \} \cup \{ k \}.
\end{align*}
Then the homogeneous derivations
\begin{align*}
\theta_{E}&=
\sum_{i=1} x_{i} \frac{\partial}{\partial x_{i}} 
+z\frac{\partial}{\partial z}, \\ 
\theta_{k}&=
\sum_{i \in C_{\geq k}} 
\left(
\prod_{j \in E_{< k}}
(x_{j}-x_{i})
\prod_{a \in \psi (v_{k})}
(x_{i}-az)
\right)
\frac{\partial}{\partial x_{i}}
\ \ (1 \leq k \leq \ell)
\end{align*}
form a basis for $D(\mathcal{A}_{G,\psi})$,
where
$E_{< k}=\{ j \mid 1 \leq j < k,\ \{v_{j},v_{k}\} \in E_{G} \}$.
\end{theorem}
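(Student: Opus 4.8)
Our approach is based on Saito's criterion (see, e.g., \cite[Theorem 4.19]{orlik1992arrangements}): the $\ell+1$ homogeneous derivations $\theta_E,\theta_1,\dots,\theta_\ell$ form a basis of $D(\mathcal{A}_{G,\psi})$ precisely when each of them lies in $D(\mathcal{A}_{G,\psi})$ and the determinant of the coefficient matrix
\[
M = \begin{pmatrix}
\theta_E(z) & \theta_E(x_1) & \cdots & \theta_E(x_\ell) \\
\theta_1(z) & \theta_1(x_1) & \cdots & \theta_1(x_\ell) \\
\vdots & \vdots & \ddots & \vdots \\
\theta_\ell(z) & \theta_\ell(x_1) & \cdots & \theta_\ell(x_\ell)
\end{pmatrix}
\]
is a nonzero scalar multiple of $Q(\mathcal{A}_{G,\psi}) = z\prod_{\{v_j,v_k\}\in E_G}(x_j-x_k)\prod_{k=1}^{\ell}\prod_{a\in\psi(v_k)}(x_k-az)$. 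So the proof splits into a determinant computation and a membership check.

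The determinant is immediate from the shape of $M$. Since no $\theta_k$ has a $\partial/\partial z$ term, the first column of $M$ is $(z,0,\dots,0)^{\mathsf{T}}$, whence $\det M = \pm z\cdot\det M'$, where $M'$ is the $\ell\times\ell$ matrix with $(k,i)$-entry $\theta_k(x_i)$. As $C_{\geq k}\subseteq\{k,k+1,\dots,\ell\}$, we have $\theta_k(x_i)=0$ for $i<k$, so $M'$ is upper triangular with diagonal entries $\theta_k(x_k)=\prod_{j\in E_{<k}}(x_j-x_k)\prod_{a\in\psi(v_k)}(x_k-az)$ (using $k\in C_{\geq k}$). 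Each edge of $G$ contributes exactly one factor to $\prod_{k}\prod_{j\in E_{<k}}(x_j-x_k)$, so $\det M=\pm z\prod_{\{v_j,v_k\}\in E_G}(x_j-x_k)\prod_{k=1}^{\ell}\prod_{a\in\psi(v_k)}(x_k-az)=\pm Q(\mathcal{A}_{G,\psi})$; the degree bookkeeping $1+|E_G|+\sum_k|\psi(v_k)|=|\mathcal{A}_{G,\psi}|$ is a consistency check.

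For the membership check, $\theta_E$ is the Euler derivation and lies in $D(\mathcal{A})$ for any central $\mathcal{A}$. For $\theta_k$, we test the three families of hyperplanes. We have $\theta_k(z)=0$. For $\{x_i=az\}$ with $a\in\psi(v_i)$: if $i\notin C_{\geq k}$ then $\theta_k(x_i-az)=0$; if $i\in C_{\geq k}$, then taking a monotone path realizing $i\in C_{\geq k}$ and applying the weighted elimination condition along its edges gives $\psi(v_i)\subseteq\psi(v_k)$, so the factor $x_i-az$ already divides $\theta_k(x_i)$. For $\{x_m=x_i\}$ with $\{v_m,v_i\}\in E_G$: if neither of $m,i$ lies in $C_{\geq k}$ the coefficient difference is $0$; if both lie in $C_{\geq k}$, then $\theta_k(x_m)$ and $\theta_k(x_i)$ coincide modulo $x_m-x_i$; and in the mixed case we use the following lemma.

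The lemma I would establish first is: \emph{if $m\in C_{\geq k}$ and $\{v_m,v_i\}\in E_G$, then $i\in C_{\geq k}$ or $i\in E_{<k}$.} For $i>m$ or $m=k$ this follows straight from the definition of $C_{\geq k}$; the essential case $m>k$, $i<k$ is handled by induction on the length of a monotone path $v_kv_{j_1}\cdots v_{j_n}v_m$ using that $v_m$ is simplicial in the subgraph induced by $\{v_1,\dots,v_m\}$ (which holds because $(v_1,\dots,v_\ell)$ is a perfect elimination ordering): this yields $\{v_i,v_{j_n}\}\in E_G$, and one recurses along the shorter path $v_kv_{j_1}\cdots v_{j_n}$. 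Granting the lemma, the mixed case is finished since $\theta_k(x_m-x_i)=\theta_k(x_m)$ is then divisible by $x_m-x_i$ because $i\in E_{<k}$ contributes the factor $x_i-x_m$. I expect this combinatorial lemma --- the one point where chordality (through the perfect elimination ordering) really enters --- to be the main obstacle; once it and the weight-monotonicity observation are in place, the membership check and the determinant computation are routine, and Saito's criterion completes the proof.
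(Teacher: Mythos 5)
Your proposal is correct and takes essentially the same route as the paper: Saito's criterion applied to the lower/upper triangular coefficient matrix with first column $(z,0,\dots,0)^{\mathsf{T}}$, the weight-monotonicity along a monotone path for the hyperplanes $\{x_i=az\}$, and the same case analysis for the edges $\{x_m-x_i=0\}$. The combinatorial lemma you isolate (in the mixed case the vertex outside $C_{\geq k}$ must lie in $E_{<k}$, proved by descending along the monotone path using simpliciality from the perfect elimination ordering) is exactly the argument the paper uses in its case (ii), so there is no substantive difference in approach.
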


\begin{proof}
First we will prove that the derivations belong to 
$D(\mathcal{A}_{G,\psi})$.
It is known that the Euler derivation $\theta_{E}$
belongs to the logarithmic derivation module of
any central arrangement.
Since $\theta_{k}$ does not contain 
$\frac{\partial}{\partial z}$, we have 
$\theta_{k}(z)=0 \in zS$.

Suppose $\{x_{s}-x_{t}=0\} \in \mathcal{A}_{G,\psi}$,
i.e., $\{v_{s},v_{t}\} \in E_{G}\ (s<t)$.
We will verify $ \theta_{k}(x_{s}-x_{t}) \in (x_{s}-x_{t})S $. 
From the definition of $C_{\geq k}$,
we can immediately check that
$s \in C_{\geq k}$ implies $t \in C_{\geq k}$.
Thus we only need to consider the cases of
(i) $s,t \in C_{\geq k}$,
(ii) $s \notin C_{\geq k}$ and $t \in C_{\geq k}$, and
(iii) $s,t \notin C_{\geq k}$.
When (i) $s,t \in C_{\geq k}$, then
\begin{align*}
&\theta_{k}(x_{s}-x_{t}) \\
&=
\prod_{j \in E_{< k}}
(x_{j}-x_{s})
\prod_{a \in \psi (v_{k})}
(x_{s}-az) 
-\prod_{j \in E_{< k}}
(x_{j}-x_{t})
\prod_{a \in \psi (v_{k})}
(x_{t}-az) \\
&\equiv
\prod_{j \in E_{< k}}
(x_{j}-x_{s})
\prod_{a \in \psi (v_{k})}
(x_{s}-az) 
-\prod_{j \in E_{< k}}
(x_{j}-x_{s})
\prod_{a \in \psi (v_{k})}
(x_{s}-az) \\
&\hspace{24em}
(\mathrm{mod}\ x_{s}-x_{t}) \\
&=0.
\end{align*}
When (ii) $s \notin C_{\geq k}$ and $t \in C_{\geq k}$,  there is a path 
$v_{k}v_{j_{1}} \cdots v_{j_{n}} v_{t}$
such that
$k < j_{1} < \cdots <j_{n} < t$.
Since $\{v_{j_{n}}, v_{t}\}$, $\{v_{s}, v_{t}\} \in E_{G}$
and $v_{t}$ is simplicial in the subgraph induced by the vertices
$\{ v_{j} \mid j \leq t \}$,
we have $\{v_{s}, v_{j_{n}}\} \in E_{G}$.
Continuing this process, we can see that
$\{v_{s},v_{k}\} \in E_{G}$,
hence $s \in E_{< k}$.
Thus
\[
\theta_{k}(x_{s}-x_{t}) 
=-\prod_{j \in E_{< k}}
(x_{j}-x_{t})
\prod_{a \in \psi (v_{k})}
(x_{t}-az)
\in (x_{s}-x_{t})S.
\]
When (iii) $s,t \notin C_{\geq k}$, then
$\theta_{k}(x_{s}-x_{t})=0$.
Therefore, we conclude that $\theta_{k}(x_{s}-x_{t}) \in (x_{s}-x_{t})S$
for any $\{ x_{s}-x_{t}=0 \} \in \mathcal{A}_{G,\psi}$.

Suppose that $\{x_{s}-bz=0\} \in \mathcal{A}_{G,\psi}$,
i.e., $b \in \psi (v_{s})$.
If $s \in C_{\geq k}$,
then there is a path 
$v_{k}v_{j_{1}} \cdots v_{j_{n}} v_{s}$
such that
$k < j_{1} < \cdots <j_{n} < s$, which implies 
$\psi (v_{k}) \supseteq \psi (v_{j_{1}}) \supseteq \cdots
\supseteq \psi (v_{j_{n}}) \supseteq \psi (v_{s})$.
Thus $b \in \psi (v_{k})$, so
\[
\theta_{k}(x_{s}-bz)=
\prod_{j \in E_{< k}}
(x_{j}-x_{s})
\prod_{a \in \psi (v_{k})}
(x_{s}-az)
\in (x_{s}-bz)S.
\]
If $s \notin C_{\geq k}$, then $\theta_{k}(x_{s}-bz)=0$.
Therefore $\theta_{k}(x_{s}-bz) \in (x_{s}-bz)S$
for $\{ x_{s}-bz=0 \} \in \mathcal{A}_{G,\psi}$.
Consequently, $\theta_{k} \in D(\mathcal{A}_{G,\psi})$
for $1 \leq k \leq \ell$.
Finally, we show that $\theta_{E},\theta_{1},\ldots.\theta_{\ell}$
form a basis for $D(\mathcal{A}_{G,\psi})$.
Notice that if $1 \leq i <k$,
then $i \notin C_{\geq k}$, so $\theta_{k}(x_{i})=0$.
Thus the determinant of the coefficient matrix of
$\theta_{E},\theta_{1},\ldots.\theta_{\ell}$
can be calculated as follows:
\begin{align*}
\left|
\begin{array}{cccc}
\theta_{E}(z) & \theta_{1}(z) & \cdots & \theta_{\ell}(z) \\
\theta_{E}(x_{1}) & \theta_{1}(x_{1}) & \cdots & \theta_{\ell}(x_{1}) \\
\vdots & \vdots & \cdots & \vdots \\
\theta_{E}(x_{\ell}) & \theta_{1}(x_{\ell}) & \cdots & \theta_{\ell}(x_{\ell})
\end{array}
\right| 
&=
\left|
\begin{array}{ccccc}
z & 0 & \cdots & 0 \\
x_{1} & \theta_{1}(x_{1}) & \ddots & \vdots \\
\vdots & \vdots & \ddots & 0 \\
x_{\ell} & \theta_{1}(x_{\ell}) & \cdots & \theta_{\ell}(x_{\ell}) 
\end{array}
\right| \\
&=z \prod_{k=1}^{\ell} \theta_{k}(x_{k}) \\
&=z
\prod_{k=1}^{\ell}
\left(
\prod_{j \in E_{< k}}
(x_{j}-x_{k})
\prod_{a \in \psi (v_{k})}
(x_{k}-az)
\right) \\
&=z
\prod_{v_{j}v_{k} \in E_{G}}
(x_{j}-x_{k})
\prod_{\substack{1\leq k \leq \ell \\ a \in \psi (v_{k})}}
(x_{k}-az) \\
&=Q(\mathcal{A}_{G,\psi}).
\end{align*}
Therefore it follows from Saito's criterion \cite{saito1980theory} (see also {\cite[Theorem 4.19]{orlik1992arrangements}})
that $\theta_{E},\theta_{1},\ldots.\theta_{\ell}$
form a basis for $D(\mathcal{A}_{G,\psi})$.
\end{proof}

Combining the proof of Theorem \ref{basis} and 
following Ziegler's theorem,
we may derive that $\mathcal{A}_{G,\psi}$ is supersolvable
if $ (G,\psi) $ has a weighted elimination ordering. 
This is another proof of the sufficiency in Theorem \ref{thm:MS1}. 

\begin{theorem}[Ziegler {\cite[Theorem 6.6]{ziegler1989combinatorial}}]
Let $\mathcal{A}$ be a central arrangement.
Then $\mathcal{A}$ is supersolvable
if and only if
there is a basis for $D(\mathcal{A})$
such that the coefficient matrix of the basis is lower triangular
for some choice of coordinates. 
\end{theorem}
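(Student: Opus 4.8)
The plan is to prove the two implications separately, in each case by induction on $\dim V$, using the characterization of supersolvability through \emph{modular coatoms}. After reducing to the essential case (the non‑essential part only adjoins a coordinate subspace on which both conditions are transparent), one uses two standard facts: a central essential arrangement $\mathcal{A}$ is supersolvable if and only if $L(\mathcal{A})$ has a modular coatom $X$ with $\mathcal{A}_X$ supersolvable; and Stanley's criterion, which says a coatom $X$ is modular exactly when for every two hyperplanes $H,H'\notin\mathcal{A}_X$ there is a (unique) $H''\in\mathcal{A}_X$ with $H\cap H'\subseteq H''$.

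For the direction ``supersolvable $\Rightarrow$ triangular basis'', choose a modular coatom $X$ and a linear change of coordinates making $X$ the $x_n$‑axis; then $\mathcal{A}_X$ consists precisely of the hyperplanes whose defining form lies in $\langle x_1,\dots,x_{n-1}\rangle$, while every $H\notin\mathcal{A}_X$ has $\alpha_H=x_n-\beta_H$ with $\beta_H\in\langle x_1,\dots,x_{n-1}\rangle$. By the inductive hypothesis $\mathcal{A}_X$, viewed in $\mathbb{K}^{n-1}$, has a lower‑triangular basis $\eta_1,\dots,\eta_{n-1}$. One then builds $\mathcal{A}$ from $\mathcal{A}_X$ by adjoining the hyperplanes of $\mathcal{A}\setminus\mathcal{A}_X$ one at a time: the modular‑coatom property guarantees that at each stage the restriction of the current arrangement to the newly added hyperplane is isomorphic to $\mathcal{A}_X$, hence free, so Terao's Addition Theorem applies and raises exactly one exponent by $1$. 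The real task is to run this constructively: keep $\eta_1,\dots,\eta_{n-1}$ (now regarded over $\mathbb{K}[x_1,\dots,x_n]$) and adjoin a single new derivation involving only $\partial/\partial x_n$, whose coefficient is the product of the $\alpha_H$ over the hyperplanes already added, plus a correction term, so that the $n$‑th row and column get filled while the triangular block coming from $\mathcal{A}_X$ is preserved. This is the abstract version of the explicit computation carried out for $\mathcal{A}_{G,\psi}$ in the proof of Theorem~\ref{basis}.

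For the direction ``triangular basis $\Rightarrow$ supersolvable'', let $\theta_1,\dots,\theta_n$ be a basis of $D(\mathcal{A})$ whose coefficient matrix $(\theta_j(x_i))$ is lower triangular in coordinates $x_1,\dots,x_n$. By Saito's criterion the product $\prod_j\theta_j(x_j)$ agrees with $Q(\mathcal{A})$ up to a nonzero scalar; since $Q(\mathcal{A})$ is squarefree this yields a partition $\mathcal{A}=\bigsqcup_j\mathcal{A}_j$ with $\theta_j(x_j)$ a scalar multiple of $\prod_{H\in\mathcal{A}_j}\alpha_H$. Lower‑triangularity forces $\theta_n=\theta_n(x_n)\,\partial/\partial x_n$, and then $\theta_n\in D(\mathcal{A})$ forces every hyperplane whose form actually involves $x_n$ to lie in $\mathcal{A}_n$; hence each $H\in\mathcal{A}\setminus\mathcal{A}_n$ has $\alpha_H\in\langle x_1,\dots,x_{n-1}\rangle$. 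One checks that $X:=\bigcap_{H\in\mathcal{A}\setminus\mathcal{A}_n}H$ is a coatom of $L(\mathcal{A})$ with $\mathcal{A}_X=\mathcal{A}\setminus\mathcal{A}_n$ and that $X$ is modular: any two hyperplanes $\{x_n=\beta_H\},\{x_n=\beta_{H'}\}$ of $\mathcal{A}_n$ meet inside $\{\beta_H=\beta_{H'}\}$, and the divisibility relations imposed by all of $\theta_1,\dots,\theta_{n-1}\in D(\mathcal{A})$, together with Saito's criterion, are exactly what forces this last hyperplane (and enough others to make $X$ a genuine coatom) to belong to $\mathcal{A}_X$. Finally, stripping the $\partial/\partial x_n$‑components from $\theta_1,\dots,\theta_{n-1}$ and adjoining $\partial/\partial x_n$ produces, again by Saito's criterion, a lower‑triangular basis of $D(\mathcal{A}_X)$; the inductive hypothesis then makes $\mathcal{A}_X$ supersolvable, and since $X$ is a modular coatom, $\mathcal{A}$ is supersolvable as well.

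The main obstacle in both directions is the same: Terao's Addition/Deletion Theorem is merely an existence statement about freeness and exponents, so to produce or to exploit the \emph{triangular shape} of a basis one must re‑examine its proof with a fixed coordinate system, carefully tracking which coordinate and which derivation is peeled off at each inductive step. A secondary technical point, in the backward direction, is extracting from the algebraic data — the partition of $\mathcal{A}$ and the divisibility relations among the $\theta_j(x_j)$ and the $\alpha_H$ — exactly the combinatorial statements that Stanley's modular‑coatom criterion requires (in particular that $X$ genuinely has corank one), together with the routine bookkeeping of the reduction to essential arrangements.
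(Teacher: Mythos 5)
This theorem is quoted in the paper from Ziegler's work without proof, so there is no in-paper argument to compare against; judged on its own, your outline has the right skeleton (induction via a modular coatom, Saito's criterion, the observation that lower triangularity forces the last derivation to be $f\,\partial/\partial x_n$), but it leaves unproved exactly the two steps that carry the content of the theorem. In the forward direction the difficulty is not where you place it: the single new derivation $\theta_n=\bigl(\prod_{H\in\mathcal{A}\setminus\mathcal{A}_X}\alpha_H\bigr)\,\partial/\partial x_n$ lies in $D(\mathcal{A})$ with no correction term whatsoever. What does need work is lifting the triangular basis $\eta_1,\dots,\eta_{n-1}$ of $D(\mathcal{A}_X)$ to $D(\mathcal{A})$: for each $j$ one must produce a $\partial/\partial x_n$-coefficient $g_j$ of the correct degree satisfying $g_j\equiv\eta_j(\beta_H)\pmod{x_n-\beta_H}$ simultaneously for all $H\notin\mathcal{A}_X$, and the solvability of this system of congruences modulo pairwise non-comaximal ideals is precisely where modularity of $X$ (giving $\beta_H-\beta_{H'}$ proportional to some $\alpha_{H''}$ with $H''\in\mathcal{A}_X$) and $\eta_j\in D(\mathcal{A}_X)$ must be combined. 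Your proposal replaces this by an appeal to Terao's addition theorem ``re-examined with a fixed coordinate system,'' which, as you yourself note, controls only freeness and exponents and not the shape of any basis; so the decisive step is asserted, not carried out.

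The backward direction has the matching gap. You need (a) that every hyperplane dividing $\theta_n(x_n)$ actually involves $x_n$ --- otherwise $\mathcal{A}_X\neq\mathcal{A}\setminus\mathcal{A}_n$, and your Saito computation for the stripped derivations together with $\partial/\partial x_n$ yields a determinant that is a proper divisor of $Q(\mathcal{A}_X)$, hence not a basis of $D(\mathcal{A}_X)$; (b) that the forms not involving $x_n$ span an $(n-1)$-dimensional space, so that $X$ is genuinely a coatom; and (c) that for any two hyperplanes $\{x_n=\beta_H\}$, $\{x_n=\beta_{H'}\}$ the form $\beta_H-\beta_{H'}$ is, up to scalar, the defining form of a hyperplane of $\mathcal{A}$. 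Points (b) and (c) are exactly Stanley's modularity criterion, and your text only says that the divisibility relations coming from $\theta_1,\dots,\theta_{n-1}$ ``are exactly what forces'' them, with no argument; this is the heart of Ziegler's theorem and the one place where those $n-1$ derivations must actually be used. Without these verifications the proposal is a plausible plan rather than a proof.
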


As a corollary of Theorem \ref{basis}, we may give the exponents of $ \mathcal{A}_{G,\psi} $, which was stated by Mu and Stanley without proof. 

\begin{corollary}[Mu-Stanley {\cite[Proposition 2]{mu2015supersolvability}}]\label{cor:MS3}
Suppose that $(G,\psi)$ has a weighted elimination ordering $ (v_{1},\ldots,v_{\ell}) $
and let $E_{<k}$ be as in Theorem \ref{basis}.
Then $\mathcal{A}_{G,\psi}$ is supersolvable with exponents
$\{ 1,|E_{<1}|+|\psi(v_{1})|,\ldots, |E_{<\ell}|+|\psi(v_{\ell})| \}$.
\end{corollary}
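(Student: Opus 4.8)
The plan is to read off the exponents directly from the basis constructed in Theorem~\ref{basis}, so that the corollary becomes essentially a bookkeeping exercise once that theorem is in hand. First I would invoke Theorem~\ref{basis}: under the hypothesis that $(G,\psi)$ has a weighted elimination ordering $(v_1,\dots,v_\ell)$, the derivations $\theta_E,\theta_1,\dots,\theta_\ell$ form a homogeneous basis for $D(\mathcal{A}_{G,\psi})$. Since a free arrangement's exponents are the multiset of degrees of any homogeneous basis, it suffices to compute $\deg\theta_E$ and $\deg\theta_k$ for each $k$.

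The degree computation is immediate from the explicit formulas. The Euler derivation $\theta_E=\sum_i x_i\frac{\partial}{\partial x_i}+z\frac{\partial}{\partial z}$ has polynomial coefficients of degree $1$, so $\deg\theta_E=1$. For $\theta_k$, each nonzero coefficient is the product $\prod_{j\in E_{<k}}(x_j-x_i)\prod_{a\in\psi(v_k)}(x_i-az)$, a product of $|E_{<k}|$ linear forms of the first kind and $|\psi(v_k)|$ linear forms of the second kind; hence every coefficient is homogeneous of degree $|E_{<k}|+|\psi(v_k)|$, giving $\deg\theta_k=|E_{<k}|+|\psi(v_k)|$. Therefore $\exp\mathcal{A}_{G,\psi}=\{1,\,|E_{<1}|+|\psi(v_1)|,\,\dots,\,|E_{<\ell}|+|\psi(v_\ell)|\}$ as claimed. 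The supersolvability assertion follows either from Theorem~\ref{thm:MS1} together with the fact that a weighted elimination ordering exists, or, as remarked after Theorem~\ref{basis}, from Ziegler's theorem applied to the basis $\theta_E,\theta_1,\dots,\theta_\ell$, whose coefficient matrix is lower triangular in the coordinates $z,x_1,\dots,x_\ell$ (using that $\theta_k(x_i)=0$ for $i<k$).

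There is essentially no obstacle here; the only thing to be slightly careful about is a degenerate case. One should check that $\theta_k\neq 0$, i.e.\ that it does have a well-defined degree: this holds because $k\in C_{\geq k}$ always, so the coefficient of $\frac{\partial}{\partial x_k}$ in $\theta_k$ is the nonzero product $\prod_{j\in E_{<k}}(x_j-x_k)\prod_{a\in\psi(v_k)}(x_k-az)$ (a product of pairwise distinct, hence nonzero, linear forms). When $E_{<k}=\varnothing$ and $\psi(v_k)=\varnothing$ the empty products are $1$, so $\deg\theta_k=0$ in that case, consistent with $|E_{<k}|+|\psi(v_k)|=0$; this is harmless. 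Thus the proof is just: apply Theorem~\ref{basis}, record the degrees of the basis elements, and conclude.
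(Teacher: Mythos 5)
Your proposal is correct and matches the paper's intended argument: the paper treats this statement as an immediate consequence of Theorem~\ref{basis}, reading the exponents off as the degrees of the homogeneous basis $\theta_E,\theta_1,\dots,\theta_\ell$, with supersolvability coming from the lower-triangular coefficient matrix via Ziegler's theorem (or from Theorem~\ref{thm:MS1}). Your extra check that $\theta_k\neq 0$ via $k\in C_{\geq k}$ is a harmless refinement of the same bookkeeping.
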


\section{The proof of Theorem \ref{thm:main1}}\label{sec:proof}
We will first prove the following lemma. 

\begin{lemma}\label{lem:nonfreepath}
Let $ \ell \geq 3 $ and $ \mathcal{A} = \mathcal{A}_{G, \psi} $ a $ \psi $-graphical arrangement defined by the following graph $ (G, \psi) $: 
\begin{center}
\begin{tikzpicture}[baseline=0pt, 
v/.style={circle, draw, inner sep=0pt, minimum size=4pt, fill=black}
]
\draw (0,0) node[v,label=below: $ \scriptstyle 1 $] (1){};
\draw (1,0) node[v,label=below: $ \scriptstyle 2 $] (2){};
\draw (2,0) node[v,label=below: $ \scriptstyle 3 $] (3){};
\draw (2.5, 0) node[] (a){};
\draw (3.5, 0) node[] (b){};
\draw (4,0) node[v,label=below: $ \scriptstyle \ell-2 $] (l-2){};
\draw (5,0) node[v,label=below: $ \scriptstyle \ell-1 $] (l-1){};
\draw (6,0) node[v,label=below: $ \scriptstyle \ell $] (l){};
\draw (1)--(2)--(3)--(a);
\draw[dashed] (a)--(b);
\draw (b)--(l-2)--(l-1)--(l);
\end{tikzpicture} \\
with $ \psi(1) \supsetneq \psi(2) = \dots = \psi(\ell-1) \subsetneq \psi(\ell). $
\end{center}
Then $ \mathcal{A} $ is not free. 
\end{lemma}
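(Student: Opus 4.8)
The plan is to prove non-freeness by exhibiting a combinatorial obstruction: I will show that if $\mathcal{A}$ were free, then either a small localization would be free that is known not to be (via the $N$-Ish correspondence of Section~\ref{sec:N-Ish}), or—more likely to be the actual route—I will use Terao's factorization theorem together with a direct computation showing the characteristic polynomial of $\mathcal{A}$ does not factor over the integers, hence $\mathcal{A}$ cannot be free.

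Concretely, first I would set up notation: the graph is a path $v_1 v_2 \cdots v_\ell$ with weights forming a ``valley'' pattern $\psi(1) \supsetneq \psi(2) = \cdots = \psi(\ell-1) \subsetneq \psi(\ell)$. Write $p = |\psi(2)| = \cdots = |\psi(\ell-1)|$, $q_1 = |\psi(1)|$, $q_\ell = |\psi(\ell)|$, so $q_1 > p$ and $q_\ell > p$. I would then compute the characteristic polynomial $\chi(\mathcal{A}_{G,\psi}, t)$. There is a known deletion-restriction or finite-field formula for $\psi$-graphical arrangements (they are subarrangements of a Shi-like arrangement); one counts points $(c, a_1, \dots, a_\ell) \in \mathbb{F}_t^{\ell+1}$ with $c \neq 0$, $a_i \neq a_{i+1}$ for consecutive path vertices, and $a_i \neq ac$ for $a \in \psi(v_i)$. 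Factoring out $z$ (the $\{z=0\}$ hyperplane contributes a factor $t$ and we may set $c=1$), I reduce to counting tuples $(a_1, \dots, a_\ell)$ avoiding: $a_i \neq a_{i+1}$ along the path, $a_i \notin \psi(v_i)$ (viewing the scalars as a subset of $\mathbb{F}_t$). The main computational step is to evaluate this count; since the path is a tree, the constraint graph has a transfer-matrix / broken-circuit structure, and the count becomes a polynomial in $t$ whose roots I would then examine.

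The key point I expect to drive the argument: because of the valley pattern, the "natural" factorization coming from a would-be weighted elimination ordering fails precisely at the junction, and the resulting polynomial $\chi(\mathcal{A}, t)/\bigl(t(t-q_1)\bigr)$ (or whichever prefix factors cleanly) has a non-integer root. I would make this rigorous for the smallest case $\ell = 3$ by direct computation—there $\chi(\mathcal{A}, t) = t \cdot \bigl( (t-1)(t - q_1 - q_\ell + \text{something}) - (\text{correction from the shared non-edge constraints}) \bigr)$—and verify the quadratic factor has irrational roots, contradicting Terao's factorization theorem. For general $\ell$, I would either reduce to $\ell = 3$ by taking a localization (restricting to vertices $\{1, 2, \ell\}$ won't work since they don't form an induced path, so instead I would contract or use that freeness passes to the multirestriction), or push the characteristic polynomial computation through for all $\ell$ using induction on the path length via the transfer matrix, showing the discriminant of the relevant quadratic-or-higher factor is never a perfect square given $q_1, q_\ell > p$.

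The hard part will be handling general $\ell$ cleanly: controlling the characteristic polynomial of a long weighted path and isolating which factor obstructs integrality. I anticipate the cleanest path is an induction that peels off an end vertex $v_\ell$ via deletion-restriction—$\mathcal{A} \setminus H$ and $\mathcal{A}^H$ for $H$ one of the hyperplanes at $v_\ell$—reducing the ``valley of length $\ell$'' to a ``valley of length $\ell-1$'' plus lower-dimensional pieces, so that a single base-case computation at $\ell = 3$ (the non-integer quadratic factor) propagates. An alternative, possibly shorter, is to invoke Lemma~\ref{lem:MS2} and the structure theory already developed: freeness of $\mathcal{A}_{G,\psi}$ forces (by Proposition~\ref{prop:local}) freeness of every induced-subgraph $\psi$-graphical arrangement, and one shows directly that the three-vertex valley arrangement is the affine cone over something equivalent to a non-free $N$-Ish-type arrangement—but since the valley is an \emph{induced path}, not a complete graph, the $N$-Ish correspondence of Proposition~\ref{prop:afequiv} does not apply verbatim, so I expect the characteristic-polynomial/Terao-factorization route to be the one that actually closes the argument.
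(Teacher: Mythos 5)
There is a genuine gap, and it sits exactly where you put all the weight: the base case $\ell=3$ via the characteristic polynomial and Terao's factorization theorem. First, $\chi(\mathcal{A}_{G,\psi},t)$ is not determined by the data in the lemma (it depends on how the sets $\psi(1)$ and $\psi(3)$ intersect, not just on their cardinalities), so any argument must cover all admissible configurations. Second, and fatally, there are admissible configurations in which $\chi$ factors completely over the nonnegative integers, so no contradiction with Terao's factorization theorem is available. Concretely, take $\ell=3$, $\psi(1)=\psi(3)\supsetneq\psi(2)$ with $|\psi(2)|=p$ and $|\psi(1)\setminus\psi(2)|=4$. The finite-field count (fix $c\neq 0$, split on whether $x_2/c$ lies in $\psi(1)\setminus\psi(2)$ or outside $\psi(1)$) gives $\chi(\mathcal{A},t)=(t-1)\bigl[4(t-p-4)^2+(t-p-4)(t-p-5)^2\bigr]=(t-1)(t-p-3)^2(t-p-4)$, which splits into integer linear factors even though the arrangement is not free. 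So ``verify the quadratic factor has irrational roots'' cannot be carried out in general, and your proposed base case collapses. (Your instinct that the $N$-Ish correspondence of Proposition~\ref{prop:afequiv} does not apply verbatim to the induced path is correct, but the characteristic-polynomial route you chose instead is strictly too weak.)

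The paper's proof avoids this by using an obstruction finer than factorization of $\chi$, namely the exponent containment in Terao's Addition--Deletion theorem (Theorem~\ref{thm:ad}). For $\ell=3$ one deletes the edge hyperplane $H=\{x_2-x_3=0\}$; the deletion has the weighted elimination ordering $(1,2,3)$, hence is supersolvable with exponents $(1,|\psi(1)|,|\psi(2)|+1,|\psi(3)|)$ by Corollary~\ref{cor:MS3}, while freeness of $\mathcal{A}$ would force the restriction to be free with exponents (computed again via Lemma~\ref{lem:MS2} and Corollary~\ref{cor:MS3}) that are provably not contained in those of the deletion --- a contradiction that persists even when $\chi(\mathcal{A},t)$ factors, as in the example above. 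Your sketch for general $\ell$ (peel off an end of the path by deletion--restriction and induct) is in fact essentially the paper's inductive step, with $H=\{x_{\ell-1}-x_\ell=0\}$: the deletion is free, so freeness of $\mathcal{A}$ would force freeness of the restriction, which is the valley path on $\ell-1$ vertices. But note this induction runs on \emph{freeness} through Theorem~\ref{thm:ad}, not on characteristic polynomials (deletion--restriction of $\chi$ does not transport non-integrality of roots), and in any case it still needs a correct $\ell=3$ base case, which is precisely what your proposal lacks.
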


The following theorem is required: 

\begin{theorem}[Orlik-Terao {\cite[Theorem 4.46]{orlik1992arrangements}}, Terao \cite{terao1980arrangements}]\label{thm:ad}
Let $ \mathcal{A} $ be an arrangement and $ H_{0} \in \mathcal{A} $. 
Let $ (\mathcal{A}, \mathcal{A}^{\prime}, \mathcal{A}^{\prime\prime
}) $ be the triple with respect to $ H_{0} $, i.e., $ \mathcal{A}^{\prime} = \mathcal{A} \setminus \{H \}, \mathcal{A}^{\prime\prime} = \left\{ H \cap H_{0} \mid H \in \mathcal{A}^{\prime} \right\} $. 
If $ \mathcal{A} $ and $ \mathcal{A}^{\prime} $ are free then $ \mathcal{A}^{\prime\prime} $ is also free and $ \exp \mathcal{A}^{\prime\prime} \subset \exp \mathcal{A}^{\prime} $. 
\end{theorem}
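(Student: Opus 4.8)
The plan is to realize the triple through Terao's restriction sequence and then extract freeness of $ \mathcal{A}^{\prime\prime} $ from that of $ \mathcal{A} $ and $ \mathcal{A}^{\prime} $ by a rank count. Set $ n \coloneqq \dim V $, write $ \alpha_{0} \coloneqq \alpha_{H_{0}} $, and let $ \bar{S} \coloneqq S/\alpha_{0}S $ be the coordinate ring of $ H_{0} $; choosing coordinates so that $ \alpha_{0}=x_{n} $ identifies $ \bar{S} $ with a polynomial ring in $ n-1 $ variables and $ \Der(\bar{S}) $ with the free $ \bar{S} $-module on $ \partial/\partial x_{1},\dots,\partial/\partial x_{n-1} $. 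For $ \theta=\sum_{i} f_{i}\,\partial/\partial x_{i}\in D(\mathcal{A}) $ the condition $ \theta(\alpha_{0})\in\alpha_{0}S $ forces $ \alpha_{0}\mid f_{n} $, so reducing the remaining coefficients modulo $ \alpha_{0} $ produces a derivation $ r(\theta)\coloneqq\sum_{i<n}\bar{f}_{i}\,\partial/\partial x_{i} $. First I would check that $ r(\theta)\in D(\mathcal{A}^{\prime\prime}) $, by reducing each relation $ \theta(\alpha_{H})\in\alpha_{H}S $ modulo $ \alpha_{0} $ (with some care when several $ H\in\mathcal{A}^{\prime} $ restrict to the same hyperplane of $ H_{0} $), and that $ \ker r=\alpha_{0}D(\mathcal{A}^{\prime}) $: if $ r(\theta)=0 $ then $ \alpha_{0}\mid f_{i} $ for all $ i $, so $ \theta=\alpha_{0}\theta_{1} $, and coprimality of $ \alpha_{0} $ with each $ \alpha_{H}\ (H\in\mathcal{A}^{\prime}) $ gives $ \theta_{1}\in D(\mathcal{A}^{\prime}) $. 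This yields the left-exact sequence
\[
0\longrightarrow D(\mathcal{A}^{\prime})\xrightarrow{\ \cdot\alpha_{0}\ }D(\mathcal{A})\xrightarrow{\ r\ }D(\mathcal{A}^{\prime\prime}),
\]
in which the first map is injective with image exactly $ \ker r $.

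Next I would use freeness. Since $ \mathcal{A}^{\prime}\subseteq\mathcal{A} $ we have $ D(\mathcal{A})\subseteq D(\mathcal{A}^{\prime}) $, both free of rank $ n $. Expressing a homogeneous basis of $ D(\mathcal{A}) $ in terms of one of $ D(\mathcal{A}^{\prime}) $ gives a transition matrix $ M $, and comparing the two instances of Saito's criterion (as used in the proof of Theorem \ref{basis}) shows that $ \det M $ equals $ Q(\mathcal{A})/Q(\mathcal{A}^{\prime})=\alpha_{0} $ up to a nonzero scalar, a form of degree $ 1 $. I would then argue that the cokernel $ D(\mathcal{A}^{\prime})/D(\mathcal{A}) $, being annihilated by $ \alpha_{0} $ and having determinant the single linear form $ \alpha_{0} $, is a cyclic free $ \bar{S} $-module; equivalently, one may choose a homogeneous basis $ \eta_{1},\dots,\eta_{n} $ of $ D(\mathcal{A}^{\prime}) $ for which $ \eta_{1},\dots,\eta_{n-1},\alpha_{0}\eta_{n} $ is a basis of $ D(\mathcal{A}) $. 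Once the sequence above is known to be right exact it becomes a short exact sequence of graded modules; the images $ r(\eta_{1}),\dots,r(\eta_{n-1}) $ then form an $ \bar{S} $-basis of $ D(\mathcal{A}^{\prime\prime}) $, so $ \mathcal{A}^{\prime\prime} $ is free, and the compatible bases show that $ \exp\mathcal{A} $ is $ \exp\mathcal{A}^{\prime} $ with one exponent raised by $ 1 $ while $ \exp\mathcal{A}^{\prime\prime} $ is $ \exp\mathcal{A}^{\prime} $ with that exponent deleted, giving $ \exp\mathcal{A}^{\prime\prime}\subset\exp\mathcal{A}^{\prime} $.

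The hard part will be the surjectivity of $ r $ onto $ D(\mathcal{A}^{\prime\prime}) $, which is precisely what upgrades the left-exact sequence to a short exact one. From the sequence alone, $ r(D(\mathcal{A}))\cong D(\mathcal{A})/\alpha_{0}D(\mathcal{A}^{\prime}) $ is free of rank $ n-1 $ and sits inside $ D(\mathcal{A}^{\prime\prime}) $, which also has rank $ n-1 $, so the cokernel is a torsion $ \bar{S} $-module; the point is to show it vanishes. My plan is to verify surjectivity after localizing at every height-one prime of $ \bar{S} $, that is, at the generic point of each codimension-two flat $ X $ of $ \mathcal{A} $ contained in $ H_{0} $: there $ \mathcal{A}_{X} $ localizes to an essentially rank-two arrangement, for which freeness is automatic and the restriction map is onto by direct computation. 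Since $ D(\mathcal{A}^{\prime\prime}) $ is reflexive over the normal domain $ \bar{S} $ (as is any logarithmic derivation module) and $ r(D(\mathcal{A})) $ is free hence reflexive, agreement in codimension one forces $ r(D(\mathcal{A}))=\bigcap_{\operatorname{ht}P=1}D(\mathcal{A}^{\prime\prime})_{P}=D(\mathcal{A}^{\prime\prime}) $. This reflexivity-plus-codimension-two reduction is the technical heart of the argument; the remaining bookkeeping with Hilbert series and exponents is then routine.
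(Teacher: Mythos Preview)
The paper does not supply a proof of this statement at all: Theorem~\ref{thm:ad} is quoted as a known result with citations to Orlik--Terao and Terao, and is then applied as a black box in the proof of Lemma~\ref{lem:nonfreepath}. So there is no ``paper's own proof'' to compare against.

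That said, your proposal is essentially the classical argument from those references, and it is sound. A couple of remarks. First, once you have the compatible bases $\eta_{1},\dots,\eta_{n}$ of $D(\mathcal{A}^{\prime})$ and $\eta_{1},\dots,\eta_{n-1},\alpha_{0}\eta_{n}$ of $D(\mathcal{A})$, the surjectivity of $r$ can also be obtained directly via Saito's criterion on $H_{0}$: the $(n-1)\times(n-1)$ coefficient determinant of $r(\eta_{1}),\dots,r(\eta_{n-1})$ is, up to a unit, the reduction modulo $\alpha_{0}$ of $Q(\mathcal{A})/\alpha_{0}$ divided by the coefficient of $\partial/\partial x_{n}$ in $\alpha_{0}\eta_{n}$, and one checks this equals $Q(\mathcal{A}^{\prime\prime})$ after accounting for hyperplanes of $\mathcal{A}^{\prime}$ that collapse on $H_{0}$. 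This is the route taken in Orlik--Terao and is somewhat more elementary than the reflexivity-plus-codimension-one localization you outline, though your approach is perfectly valid and is closer in spirit to Terao's original paper. Second, the step ``$D(\mathcal{A}^{\prime})/D(\mathcal{A})$ is cyclic free over $\bar S$'' deserves a line of justification: it follows from the elementary-divisor (Smith normal form) argument applied to the inclusion of free graded $S$-modules of the same rank with determinant a single linear form, but this is not entirely automatic over a non-PID and is the content of a separate lemma in Orlik--Terao.
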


\begin{proof}[Proof of Lemma \ref{lem:nonfreepath}]
We will prove the assertion by induction on $ \ell $. 
Suppose that $ \ell = 3 $. 
Let $ H = \{ x_{2}-x_{3} = 0 \} $ and $ (\mathcal{A}, \mathcal{A}^{\prime}, \mathcal{A}^{\prime\prime}) $ the triple with respect to $ H $. 
Then the ordering $ (1,2,3) $ is a weighted elimination ordering for $ \mathcal{A}^{\prime} $. 
Hence $ \mathcal{A}^{\prime} $ is supersolvable and free with the exponents $ \exp(\mathcal{A}^{\prime}) = (1, |\psi(1)|, |\psi(2)|+1, |\psi(3)| ) $ by Corollary \ref{cor:MS3}. 
Assume that $ \mathcal{A} $ is free.
Then Theorem \ref{thm:ad} asserts that $ \mathcal{A}^{\prime\prime} $ is also free and $ \exp \mathcal{A}^{\prime\prime} \subset \exp \mathcal{A}^{\prime} $. 
Then $ \psi(1) \subseteq \psi(3) $ or $ \psi(1) \supseteq \psi(3) $ by Lemma \ref{lem:MS2}. 
By Lemma \ref{cor:MS3}, the exponents of $ \mathcal{A}^{\prime\prime} $ are $ (1, |\psi(1)|+1, |\psi(3)|) $ or $ (1, |\psi(1)|, |\psi(3)|+1) $ respectively. 
In the both case, we have that $ \exp \mathcal{A}^{\prime\prime} \not\subset \exp \mathcal{A}^{\prime} $. 
This contradiction concludes that $ \mathcal{A} $ is not free. 

Suppose that $ \ell \geq 4 $ and that the statement is true for $ \ell-1 $. 
Let $ H = \{x_{\ell-1}-x_{\ell}=0 \} $ and $ (\mathcal{A}, \mathcal{A}^{\prime}, \mathcal{A}^{\prime\prime}) $ the triple with respect to $ H $. 
Since the ordering $ (1,2, \dots, \ell) $ is a weighted elimination ordering for $ \mathcal{A}^{\prime} $, the arrangement $ \mathcal{A}^{\prime} $ is free. 
If $ \mathcal{A} $ is free, then $ \mathcal{A}^{\prime\prime} $ is also free by Theorem \ref{thm:ad}. 
Contrary, by the induction hypothesis, we have that $ \mathcal{A}^{\prime\prime} $ is not free, which is a contradiction. 
Hence $ \mathcal{A} $ is not free. 
\end{proof}

\begin{proof}[Proof of Theorem \ref{thm:main1}]
By Theorem \ref{thm:MS1}, the equivalence $ (1) \Leftrightarrow (2) $ holds. 
The implication $ (2) \Rightarrow (3) $ is well-known. 
Theorem \ref{thm:vwg} shows $ (1) \Leftrightarrow (4) \Leftrightarrow (5) $. 
The rest part of the proof is $ (3) \Rightarrow (4) $. 
Suppose that $ \mathcal{A}_{G, \psi} $ is free. 
By Proposition \ref{prop:local}, the graphical arrangement $ \mathcal{A}_{G} $ is free. 
Therefore $ G $ is chordal by Theorem \ref{thm:stanley}. 
By Lemma \ref{lem:MS2}, Proposition \ref{prop:local} and Lemma \ref{lem:nonfreepath}, $ (G,\psi) $ has no induced paths in (4). 
Thus the proof has been completed. 
\end{proof}

\section{Characterization of the freeness of the $\psi$-graphical multiarrangements}\label{sec:multi}
In this section, we introduce the $ \psi $-graphical multiarrangements, which are determined by a vertex-weighted graph over $ \mathbb{Z}_{\geq 0} $, and give a characterization of the freeness of the $\psi$-graphical multiarrangements.
First, we introduce some definitions of multiarrangements.

A \textbf{multiarrangement} $(\mathcal{A},m)$ is a pair of
an arrangement $\mathcal{A}$ with a map
$m:\mathcal{A} \rightarrow \mathbb{Z}_{\geq 0}$,
which is called multiplicity, and its defining polynomial is
$Q(\mathcal{A},m)=\prod_{H \in \mathcal{A}} \alpha_{H}^{m(H)}$.
When $m(H)=1$ for any $H \in \A$, then $(\A,m)$ is just a hyperplane arrangement and sometimes called a simple arrangement.
The module of logarithmic derivations $D(\mathcal{A},m)$
of a multiarrangement is defined by
\[
D(\mathcal{A},m):=
\{ \theta \in \Der(S) \mid \theta (\alpha_{H}) \in
\alpha_{H}^{m(H)} S \text{ for any } H \in \mathcal{A} \}.
\]
We say that $(\mathcal{A},m)$ is \textbf{free} if $D(\mathcal{A},m)$
is a free $S$-module.

\begin{definition}
Let $ (G,\psi) $ be a vertex-weighted graph over $ \mathbb{Z}_{\geq 0} $ with vertex set $ V_{G}=\{1,\dots,\ell\} $ and edge set $ E_{G} $. 
Note that $ \psi $ is a map from $ V_{G} $ to $ \mathbb{Z}_{\geq 0} $. 
The \textbf{$\psi$-graphical multiarrangement} $\pgma$ is defined by the following defining polynomial: 
\[
Q(\pgma) = \prod_{\{i,j\} \in E_{G}} (x_{i}-x_{j})
\prod_{1 \leq i \leq \ell} x_{i}^{\psi(i)}.
\] 
\end{definition}

We can regard a $\psi$-graphical multiarrangement as a graphical multiarrangement, which is a graphical arrangement with multiplicity, by the coordinates change $x_{i} \mapsto x_{0}-x_{i}\ (1 \leq i \leq \ell)$. 
Several partial results for characterizing the freeness of graphical multiarrangements have been found, for example, see \cite{abe2009signed,DiPasquale2017generalized,terao2002multi}.

The freeness of the $\psi$-graphical multiarrangements is characterized as follows similarly to Theorem \ref{thm:main1}:

\begin{theorem}\label{thm:multi}
Let $\pgma$ be a $\psi$-graphical multiarrangement.
Then the following conditions are equivalent: 
\begin{enumerate}
\item $ (G, \psi) $ has a weighted elimination ordering. 
\item $ D( \pgma ) $ has a basis whose coefficient matrix is lower triangular.
\item $ \pgma $ is free. 
\end{enumerate}
\end{theorem}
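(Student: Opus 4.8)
The structure will mirror the proof of Theorem \ref{thm:main1}, reducing everything to the simple case that has already been handled. The implication $(2) \Rightarrow (3)$ is immediate: if $D(\pgma)$ has a basis with lower triangular coefficient matrix, then in particular $D(\pgma)$ is free. The implication $(1) \Rightarrow (2)$ is the constructive heart: I would imitate the basis construction in Theorem \ref{basis}, transplanted to the multiarrangement setting. Given a weighted elimination ordering $(v_{1},\ldots,v_{\ell})$ with coordinates $x_{1},\ldots,x_{\ell}$, define (with $C_{\geq k}$ and $E_{<k}$ exactly as in Theorem \ref{basis})
\[
\theta_{k} = \sum_{i \in C_{\geq k}} \left( \prod_{j \in E_{<k}} (x_{j}-x_{i}) \cdot x_{i}^{\psi(v_{k})} \right) \frac{\partial}{\partial x_{i}} \qquad (1 \leq k \leq \ell),
\]
which is the literal analogue of $\theta_{k}$ with the factor $\prod_{a \in \psi(v_{k})}(x_{i}-az)$ replaced by $x_{i}^{\psi(v_{k})}$. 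One checks $\theta_{k} \in D(\pgma)$ by the same three-case analysis on edges $\{x_{s}-x_{t}=0\}$ (using that $s \in C_{\geq k}$ forces $t \in C_{\geq k}$, and that in case (ii) one gets $s \in E_{<k}$), and for the hyperplanes $x_{s}=0$ one uses that $s \in C_{\geq k}$ implies $\psi(v_{k}) \geq \psi(v_{s})$ so that $x_{s}^{\psi(v_{k})} \in x_{s}^{\psi(v_{s})}S$. Since $i < k$ implies $i \notin C_{\geq k}$, the coefficient matrix of $\theta_{1},\ldots,\theta_{\ell}$ is lower triangular with diagonal entries $\theta_{k}(x_{k}) = \prod_{j \in E_{<k}}(x_{j}-x_{k}) \cdot x_{k}^{\psi(v_{k})}$, so its determinant is $\prod_{\{j,k\} \in E_{G}}(x_{j}-x_{k}) \cdot \prod_{k} x_{k}^{\psi(v_{k})} = Q(\pgma)$, and Saito's criterion gives that $\{\theta_{1},\ldots,\theta_{\ell}\}$ is a basis with lower triangular coefficient matrix.

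\textbf{The remaining implication} $(3) \Rightarrow (1)$ is where the work lies. The plan is to show that freeness of $\pgma$ fails whenever $(G,\psi)$ lacks a weighted elimination ordering, using Theorem \ref{thm:vwg}: such a failure means either $G$ is not chordal, or $G$ contains an induced path of type (i) or (ii) from Proposition \ref{prop:non-unimodal}. For the non-chordal case, the localization to the vertex set of an induced chordless cycle of length $\geq 4$ is (after the coordinate change $x_i \mapsto x_0 - x_i$) a graphical multiarrangement on a cycle; one needs that graphical arrangements on induced cycles of length $\geq 4$, even with multiplicities on the $x_i=0$ hyperplanes absent here, are non-free — this should follow from the classical fact that $\mathcal{A}_C$ is non-free for a chordless cycle $C$ of length $\geq 4$ together with localization, since setting all weights could only be handled by first localizing to the induced cycle and using Proposition \ref{prop:local}. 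For the type (i) path (an edge $\{v_1,v_2\}$ with $\psi(v_1), \psi(v_2)$ incomparable in $\mathbb{Z}_{\geq 0}$): this case is vacuous, since any two nonnegative integers are comparable. So the only genuine obstruction is a type (ii) induced path $v_1 \cdots v_k$ with $\psi(v_1) > \psi(v_2) = \cdots = \psi(v_{k-1}) < \psi(v_k)$; by Proposition \ref{prop:local} the localization to $\{v_1,\ldots,v_k\}$ is $\mathcal{A}_{S,\psi_S}$, a $\psi$-graphical multiarrangement on a path with this weight pattern, and I would prove it non-free by the same induction-on-$k$ argument as in Lemma \ref{lem:nonfreepath}, via the triple $(\mathcal{A}, \mathcal{A}', \mathcal{A}'')$ with respect to $H = \{x_{k-1}-x_k=0\}$, using the exponent count from the $(1)\Rightarrow(2)$ construction and the multiarrangement analogue of Theorem \ref{thm:ad} (the addition-deletion theorem holds for multiarrangements in the form: if $\mathcal{A}$ and $\mathcal{A}'$ are free then $\mathcal{A}''$ is free with $\exp\mathcal{A}'' \subset \exp\mathcal{A}'$ — this is Abe's result, or one can argue directly).

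\textbf{The main obstacle} will be establishing the non-freeness results in the $(3) \Rightarrow (1)$ direction cleanly. The path case needs a multiarrangement version of addition-deletion together with a precise exponent bookkeeping to reach a contradiction exactly as in Lemma \ref{lem:nonfreepath}: for $k=3$ one computes $\exp\mathcal{A}' = (|\psi(v_1)|, |\psi(v_2)|+1, |\psi(v_3)|)$ for the free multiarrangement with weighted elimination ordering $(v_1, v_2, v_3)$, while $\mathcal{A}''$ (after identifying $x_2 = x_3$) is a two-variable $\psi$-graphical multiarrangement on a single edge with weights $\psi(v_1)$ and $\psi(v_3)$, necessarily free with exponents $(|\psi(v_1)|, |\psi(v_3)|+1)$ or $(|\psi(v_1)|+1, |\psi(v_3)|)$ depending on which of $\psi(v_1), \psi(v_3)$ is larger (here again all weights are comparable, so Lemma \ref{lem:MS2} is not needed), and in either case $\exp\mathcal{A}'' \not\subset \exp\mathcal{A}'$. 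The non-chordal case requires knowing non-freeness of graphical multiarrangements on chordless cycles; if a direct citation is unavailable one can either delete all non-essential hyperplanes to reduce to the simple arrangement $\mathcal{A}_C$ on the cycle (using that adding multiplicity-one copies back does not affect the issue — more carefully, localize first), or invoke the characterization of free graphical multiarrangements on cycles from the references already cited (\cite{abe2009signed, DiPasquale2017generalized, terao2002multi}). I expect the cycle case to be the more delicate one to state rigorously, and would handle it by careful localization to an induced chordless cycle followed by citation of the known non-freeness.
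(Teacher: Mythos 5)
Your implications $(1)\Rightarrow(2)\Rightarrow(3)$ are fine: the derivations you write down are exactly the restrictions to $\{z=0\}$ of the basis in Theorem \ref{basis}, the three-case check goes through verbatim, and the determinant computation plus Ziegler's version of Saito's criterion gives a lower triangular basis (the paper gets the same basis less directly, via Theorem \ref{ziemulti} applied to $\mathcal{A}_{G,\psitilde}$). The chordality part of $(3)\Rightarrow(1)$ by localizing to an induced cycle (or, as the paper does, to all of $\mathcal{A}_{G}$) and using Lemma \ref{multilocal} is also fine, and you are right that obstruction (i) is vacuous over $\mathbb{Z}_{\geq 0}$. The genuine gap is the type (ii) path case. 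The tool you invoke --- ``if $(\mathcal{A},m)$ and the deletion $(\mathcal{A}',m')$ are free then the restriction is free with $\exp\mathcal{A}''\subset\exp\mathcal{A}'$'' --- is not a theorem for multiarrangements. The Abe--Terao--Wakefield addition--deletion theorem requires the restriction to carry the \emph{Euler} multiplicity $m^{*}$, which is not the naive multiplicity you use (on $\{x_{2}=x_{3}\}$ the two hyperplanes $x_{2}=0$ and $x_{3}=0$ collapse, and $m^{*}$ there is computed from the exponents of the rank-two localization $x_{2}^{\psi(v_2)}x_{3}^{\psi(v_3)}(x_{2}-x_{3})$, not read off a contracted weighted graph), and, more importantly, that theorem only says that any two of the three conditions \emph{including prescribed exponent patterns} imply the third; the Orlik--Terao-type ``free $+$ free $\Rightarrow$ free restriction'' statement of Theorem \ref{thm:ad} has no multiarrangement analogue, because its proof rests on deletion--restriction for the characteristic polynomial and Terao factorization, which fail for multiarrangements. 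So the induction modeled on Lemma \ref{lem:nonfreepath} cannot be run as written, and ``or one can argue directly'' is exactly the missing content.

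The paper closes this case by a different idea: extend the field to be infinite (Lemma \ref{fieldextension}), choose an injection $\iota\colon\mathbb{Z}_{>0}\to\mathbb{K}$ and set $\psitilde(i)=\{\iota(1),\dots,\iota(\psi(i))\}$, so that $\mathcal{M}_{P,\psi_{P}}$ is the Ziegler restriction of the \emph{simple} arrangement $\mathcal{A}_{P,\psitilde_{P}}$ onto $\{z=0\}$. Lemma \ref{localinduced} shows every proper localization $(\mathcal{A}_{P,\psitilde_{P}})_{X}$ with $X$ in the restricted lattice is free, so Yoshinaga's criterion (Theorem \ref{yoshicriterion}) says that freeness of the Ziegler restriction would force freeness of $\mathcal{A}_{P,\psitilde_{P}}$ itself, contradicting Lemma \ref{lem:nonfreepath} (via Theorem \ref{thm:main1}). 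To repair your proof you would need either this lifting argument or an independent non-freeness proof for the path multiarrangement (e.g.\ via the Abe--Terao--Wakefield local--global $b_{2}$ criterion), neither of which is sketched in your proposal.
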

To prove this theorem, we need the following theorems and lemmas.

Let $\mathcal{A}$ be a central arrangement and fix $H_{0} \in \mathcal{A}$.
The \textbf{Ziegler restriction} $(\mathcal{A}^{H_{0}},m^{H_{0}})$
of $\mathcal{A}$ onto $H_{0}$ is defined by
$\mathcal{A}^{H_{0}} =
\{ H \cap H_{0} \mid H \in \mathcal{A} \setminus \{H_{0}\} \}$,
$m^{H_{0}} (X) =
| \mathcal{A}_{X} \setminus \{ H_{0} \} |
\text{ for } X \in \mathcal{A}^{H_{0}}.$
\begin{theorem}[Ziegler {\cite[Theorem 11]{ziegler1989multiarrangements}}]\label{ziemulti}
Let $\mathcal{A}$ be an arrangement and $H_{0} \in \mathcal{A}$.
If $\mathcal{A}$ is free and $\{ \theta_{E}, \theta_{1}, \ldots, \theta_{\ell} \} $ is a basis for $D(\mathcal{A})$, 
where $\theta_{i}(\alpha_{H_{0}})=0$ for $1 \leq i \leq \ell$,
then the Ziegler restriction $(\mathcal{A}^{H_{0}},m^{H_{0}})$ is free
and $\{ \theta_{1}|_{H_{0}}, \ldots, \theta_{\ell}|_{H_{0}} \} $
is a basis for $D(\mathcal{A}^{H_{0}},m^{H_{0}})$.
\end{theorem}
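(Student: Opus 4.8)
The plan is to verify the two conditions that characterize a basis of a free multiarrangement for the restricted derivations $\bar\theta_i := \theta_i|_{H_0}$: \emph{membership} in $D(\mathcal{A}^{H_0},m^{H_0})$ and the \emph{Saito determinant identity}. First I would fix coordinates $z,x_1,\dots,x_\ell$ on $V^*$ so that $\alpha_{H_0}=z$ and $H_0=\{z=0\}$. Then the hypothesis $\theta_i(\alpha_{H_0})=0$ says exactly that each $\theta_i$ ($1\le i\le\ell$) preserves the ideal $(z)$, hence descends to a well-defined derivation $\bar\theta_i$ of $\bar S:=S/(z)=\mathbb{K}[\bar x_1,\dots,\bar x_\ell]$, the coordinate ring of $H_0$, satisfying $\bar\theta_i(\bar x_j)=\overline{\theta_i(x_j)}$.

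The heart of the argument is membership. Fix $X\in\mathcal{A}^{H_0}$ and let $H_1,\dots,H_m$, with $m=m^{H_0}(X)$, be the hyperplanes of $\mathcal{A}$ other than $H_0$ containing $X$. Choose a linear form $y$ with $X=\{z=y=0\}$, so that $\alpha_X=\bar y$; each $\alpha_{H_k}$ vanishes on the codimension-two flat $X$ and therefore has the form $\alpha_{H_k}=a_kz+b_ky$ with $b_k\neq0$. This is where $\theta_i(z)=0$ does the essential work: the inclusion $\theta_i(\alpha_{H_k})\in\alpha_{H_k}S$ collapses to $b_k\,\theta_i(y)\in(a_kz+b_ky)S$, so $\theta_i(y)$ is divisible by $\alpha_{H_k}$ for every $k$. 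Since the $H_k$ are distinct, the forms $a_kz+b_ky$ are pairwise coprime in the UFD $S$, whence $\theta_i(y)$ is divisible by their product. Reducing modulo $z$ then yields $\bar\theta_i(\alpha_X)\in\alpha_X^m\bar S$, which is precisely the defining condition of $D(\mathcal{A}^{H_0},m^{H_0})$; thus $\bar\theta_i\in D(\mathcal{A}^{H_0},m^{H_0})$.

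For the determinant identity I would apply Saito's criterion to the given basis of $D(\mathcal{A})$, obtaining $\det M = c\,Q(\mathcal{A})$ for a nonzero constant $c$, where $M$ has rows indexed by $z,x_1,\dots,x_\ell$ and columns by $\theta_E,\theta_1,\dots,\theta_\ell$. Because $\theta_i(z)=0$ and $\theta_E(z)=z$, the $z$-row of $M$ equals $(z,0,\dots,0)$, so expanding along it gives $z\cdot\det(\theta_i(x_j))_{1\le i,j\le\ell}=c\,z\prod_{H\neq H_0}\alpha_H$. Cancelling $z$ and reducing modulo $z$ converts the left-hand side into $\det(\bar\theta_i(\bar x_j))$ and the right-hand side into $c\prod_{H\neq H_0}\bar\alpha_H$. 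Grouping the hyperplanes $H\neq H_0$ by their intersection $X=H\cap H_0$, each group contributes exactly $m^{H_0}(X)$ factors proportional to $\alpha_X$, so this product is a nonzero scalar multiple of $\prod_{X}\alpha_X^{m^{H_0}(X)}=Q(\mathcal{A}^{H_0},m^{H_0})$.

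With membership established and $\det(\bar\theta_i(\bar x_j))$ equal up to a nonzero scalar to $Q(\mathcal{A}^{H_0},m^{H_0})$, the multiarrangement version of Saito's criterion immediately yields that $\{\bar\theta_1,\dots,\bar\theta_\ell\}$ is a basis for $D(\mathcal{A}^{H_0},m^{H_0})$ and hence that the Ziegler restriction is free; the nonvanishing of the determinant also subsumes linear independence of the $\bar\theta_i$. I expect the membership step to be the main obstacle: the crux is recognizing that the condition $\theta_i(z)=0$ upgrades the single-hyperplane divisibilities $\theta_i(y)\in\alpha_{H_k}S$ to divisibility by the entire product through coprimality, thereby producing the correct exponent $m^{H_0}(X)$ rather than merely $1$. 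Once this is in hand, the determinant computation and the appeal to Saito's criterion are routine bookkeeping.
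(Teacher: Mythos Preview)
The paper does not supply its own proof of this statement: it is quoted as Ziegler's result \cite[Theorem 11]{ziegler1989multiarrangements} and used as a black box in the proof of Theorem~\ref{thm:multi}. There is therefore nothing in the paper to compare against.

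That said, your argument is correct and is essentially the standard proof. The two ingredients you isolate are exactly the right ones. For membership, the key observation is precisely what you highlight: because $\theta_i(z)=0$, the single divisibility $\theta_i(\alpha_{H_k})\in\alpha_{H_k}S$ becomes $\theta_i(y)\in\alpha_{H_k}S$ for \emph{every} $H_k$ in the fibre over $X$, and pairwise coprimality of the linear forms $a_kz+b_ky$ then forces $\theta_i(y)$ to lie in their product; reducing modulo $z$ collapses that product to a scalar times $\alpha_X^{m^{H_0}(X)}$, giving the required multiplicity. For the determinant, the $z$-row $(z,0,\dots,0)$ makes the cofactor expansion immediate, and grouping $\prod_{H\neq H_0}\bar\alpha_H$ by the flats $X=H\cap H_0$ produces $Q(\mathcal{A}^{H_0},m^{H_0})$ up to a nonzero scalar. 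The multiarrangement Saito criterion (also due to Ziegler) then finishes the job. Nothing is missing; the one implicit fact you should be prepared to cite is that Saito's criterion extends verbatim to multiarrangements, which is Theorem~8 of the same Ziegler paper.
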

\begin{theorem}[Yoshinaga {\cite[Theorem 2.2]{yoshinaga2004characterization}}]\label{yoshicriterion}
Suppose $\ell \geq 3$.
Let $\A$ be an arrangement in $(\ell+1)$-dimensional vector space and $H_{0} \in \A$.
Then $\A$ is free if and only if the Ziegler restriction $(\A^{H_{0}},m^{H_{0}})$ is free and $\A_{X}$ is free for any $X \in L(\A^{H_{0}}) \setminus \{ \bigcap_{H \in \A} H \} $.
\end{theorem}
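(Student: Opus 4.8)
The plan is to realize $\pgma$ as a Ziegler restriction of a genuine $\psi$-graphical arrangement and then transport the basis of Section~\ref{sec:basis} and the classification of Theorem~\ref{thm:main1} across this restriction. Fix the auxiliary map $\psitilde \colon V_{G} \to 2^{\mathbb{K}}$ with $\psitilde(i) = \{1, 2, \dots, \psi(i)\}$, so that $\psi(i) = |\psitilde(i)|$ and the subsets $\psitilde(i)$ are nested (hence any two are comparable, just as the integer weights of $(G,\psi)$ are). A direct computation of multiplicities shows that the Ziegler restriction of $\mathcal{A}_{G,\psitilde}$ onto $H_{0} = \{z = 0\}$ is exactly $\pgma$: each $\{x_{i}-x_{j}=0\}$ restricts to $\{x_{i}=x_{j}\}$ with multiplicity $1$, while the $\psi(i)$ hyperplanes $\{x_{i}=az\}$, $a \in \psitilde(i)$, all restrict to $\{x_{i}=0\}$, giving it multiplicity $\psi(i)$; and $(G,\psi)$ has a weighted elimination ordering if and only if $(G,\psitilde)$ does. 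For $(1)\Rightarrow(2)$ I would take the basis $\theta_{E},\theta_{1},\dots,\theta_{\ell}$ of $D(\mathcal{A}_{G,\psitilde})$ from Theorem~\ref{basis}, where each $\theta_{k}$ satisfies $\theta_{k}(z)=0$, so by Ziegler's Theorem~\ref{ziemulti} the restrictions $\theta_{1}|_{H_{0}},\dots,\theta_{\ell}|_{H_{0}}$ form a basis of $D(\pgma)$. Setting $z=0$ turns $\prod_{a \in \psitilde(v_{k})}(x_{i}-az)$ into $x_{i}^{\psi(v_{k})}$, and since $\theta_{k}(x_{i})=0$ whenever $i<k$ (because $C_{\geq k} \subseteq \{k,\dots,\ell\}$), this basis has lower triangular coefficient matrix, which is $(2)$. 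The implication $(2)\Rightarrow(3)$ is immediate, since the existence of any basis means $D(\pgma)$ is free.

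The substance is $(3)\Rightarrow(1)$, which I would prove by the obstruction scheme of Theorem~\ref{thm:main1}, now for multiarrangements. Two ingredients are needed. First, an analogue of Proposition~\ref{prop:local}: localizing $\pgma$ at the intersection of all edge-hyperplanes returns the simple graphical arrangement $\mathcal{A}_{G}$, and localizing at the flat $\bigcap_{H \in \mathcal{M}_{S,\psi_{S}}}H$ attached to an induced subgraph $S$ returns $\mathcal{M}_{S,\psi_{S}}$; the proof copies the test-vector argument of Proposition~\ref{prop:local} with the coordinate $z$ deleted. Second, I would use the standard fact that every localization of a free multiarrangement is again free (this follows by localizing $D(\mathcal{A},m)$ at the prime of the flat, where the conditions from hyperplanes avoiding the flat become vacuous, and then applying graded Nakayama). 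Granting these, if $\pgma$ is free then $\mathcal{A}_{G}$ is free, so $G$ is chordal by Theorem~\ref{thm:stanley}; and since over $\mathbb{Z}_{\geq 0}$ every pair of weights is comparable, Theorem~\ref{thm:vwg} reduces $(1)$ to showing that $(G,\psi)$ contains no induced path of type (ii) of Proposition~\ref{prop:non-unimodal}.

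The crux is thus the multiarrangement version of Lemma~\ref{lem:nonfreepath}: for a path $P=v_{1}\cdots v_{k}$ with $k\geq 3$ and $\psi(v_{1})>\psi(v_{2})=\dots=\psi(v_{k-1})<\psi(v_{k})$, the multiarrangement $\mathcal{M}_{P,\psi_{P}}$ is not free. Combined with the localization ingredients above, such paths cannot occur as induced subgraphs of a free $\pgma$. I would deduce non-freeness from Yoshinaga's criterion (Theorem~\ref{yoshicriterion}) applied to $\mathcal{A}_{P,\psitilde_{P}}$, whose Ziegler restriction is $\mathcal{M}_{P,\psi_{P}}$. By Theorem~\ref{thm:main1} this arrangement is \emph{not} free, since a type-(ii) path has no weighted elimination ordering. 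On the other hand, every \emph{proper} localization $(\mathcal{A}_{P,\psitilde_{P}})_{X}$ is itself a $\psi$-graphical arrangement $\mathcal{A}_{P',\psitilde'}$, where $P'$ is the subforest of $P$ spanned by the edges active on $X$ and the weights are retained on exactly the vertices forced to zero by $X$. The key combinatorial point is that the only type-(ii) pattern carried by $P$ is the full path $v_{1}\cdots v_{k}$, and realizing it in a localization would force $X$ to be the center; hence every proper localization is chordal with no type-(ii) induced path, so it is free by Theorem~\ref{thm:main1}. Since $\mathcal{A}_{P,\psitilde_{P}}$ is not free while all its proper localizations are, Yoshinaga's criterion forces the Ziegler restriction $\mathcal{M}_{P,\psi_{P}}$ to be non-free, completing $(3)\Rightarrow(1)$.

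The main obstacle I anticipate is the bookkeeping inside the Path Lemma: one must check that each flat $X$ of the restriction lattice yields a localization of the stated form $\mathcal{A}_{P',\psitilde'}$ (in particular that the set of zeroed vertices is a union of blocks of active edges, including the degenerate case $\psi(v_{2})=\dots=\psi(v_{k-1})=0$, where the middle carries no hyperplane at all), and that the type-(ii) pattern genuinely survives only at the center. The remaining delicacy is supplying a clean citation or self-contained proof of ``every localization of a free multiarrangement is free,'' which underlies the passage from $\pgma$ to both $\mathcal{A}_{G}$ and $\mathcal{M}_{P,\psi_{P}}$.
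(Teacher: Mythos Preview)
Your proposal does not address the stated theorem. The statement you were given is Yoshinaga's criterion (Theorem~\ref{yoshicriterion}), which the paper does not prove at all: it is quoted from \cite{yoshinaga2004characterization} and used as a black box. What you have actually written is a proof outline for Theorem~\ref{thm:multi}, the classification of freeness for $\psi$-graphical multiarrangements. These are different statements: Yoshinaga's criterion is a general structural result about arbitrary central arrangements, while Theorem~\ref{thm:multi} is a specific combinatorial characterization that \emph{invokes} Yoshinaga's criterion as one ingredient.

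If your intended target was Theorem~\ref{thm:multi}, then your approach coincides with the paper's: realize $\pgma$ as the Ziegler restriction of $\mathcal{A}_{G,\psitilde}$, push the basis of Theorem~\ref{basis} through Theorem~\ref{ziemulti} for $(1)\Rightarrow(2)$, and for $(3)\Rightarrow(1)$ localize to obtain $\mathcal{A}_{G}$ and $\mathcal{M}_{P,\psi_{P}}$, then combine Theorem~\ref{thm:main1} with Yoshinaga's criterion to show the path multiarrangement is non-free. The paper supplies the two ingredients you flagged as obstacles: Lemma~\ref{multilocal} (Abe) for freeness of localizations of free multiarrangements, and Lemma~\ref{localinduced} for the bookkeeping that every proper localization of $\mathcal{A}_{P,\psitilde_{P}}$ is free. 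There is, however, one genuine gap in your version: setting $\psitilde(i)=\{1,2,\dots,\psi(i)\}$ silently assumes these are $\psi(i)$ distinct elements of $\mathbb{K}$, which fails over small finite fields. The paper handles this by first applying Lemma~\ref{fieldextension} to pass to an infinite extension and then choosing an injection $\iota\colon\mathbb{Z}_{>0}\to\mathbb{K}$; without that reduction your Ziegler restriction need not recover $\pgma$.
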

\begin{lemma}[Abe {\cite[Lemma 3.8]{abe2006freeness}}]\label{multilocal}
Let $(\A,m)$ be a multiarrangement and $X \in L(\A)$.
If $(\A,m)$ is free, then the localization $(\A_{X},m_{X})$ is free, where $m_{X}(H)=m(H)$ for $H \in \A_{X}$.
\end{lemma}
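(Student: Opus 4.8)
The plan is to deduce global freeness of $(\A_X, m_X)$ from a local statement at the prime of $X$, and then to upgrade that local statement to a global one using the special geometry of $\A_X$ — that every hyperplane of $\A_X$ contains $X$. This is the multiarrangement analogue of the simple case (Orlik--Terao, Theorem 4.37), and I would follow the same three-step strategy.

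First I would localize. Let $\mathfrak{p} = \mathfrak{p}_X \subset S$ be the (linear) prime ideal of polynomials vanishing on $X$, and write $S_{\mathfrak{p}}$ for the localization. Since
\begin{align*}
D(\A,m) = \bigcap_{H \in \A}\bigl\{\theta \in \Der(S) : \theta(\alpha_H) \in \alpha_H^{m(H)}S\bigr\}
\end{align*}
is a finite intersection of $S$-submodules of the free module $\Der(S)$, localization commutes with it, so $D(\A,m)_{\mathfrak{p}} = \{\theta \in \Der(S_{\mathfrak{p}}) : \theta(\alpha_H) \in \alpha_H^{m(H)} S_{\mathfrak{p}}\ \text{for all } H \in \A\}$. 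For $H \notin \A_X$ we have $X \not\subseteq H$, hence $\alpha_H \notin \mathfrak{p}$ is a unit in $S_{\mathfrak{p}}$ and the corresponding condition is vacuous; for $H \in \A_X$ the condition is retained. Thus $D(\A,m)_{\mathfrak{p}} = D(\A_X,m_X)_{\mathfrak{p}}$. Since $(\A,m)$ is free, $D(\A,m)$ is $S$-free, so $D(\A_X,m_X)_{\mathfrak{p}}$ is free over $S_{\mathfrak{p}}$.

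Next I would exploit the product structure of $\A_X$. Choose coordinates so that $S = \mathbb{K}[x_1,\dots,x_n]$, the flat $X$ is $\{x_1 = \dots = x_k = 0\}$, and every $\alpha_H$ with $H \in \A_X$ lies in $S_1 := \mathbb{K}[x_1,\dots,x_k]$; this is possible because these forms all vanish on $X$. Writing a general derivation as $\theta = \sum_{j=1}^n g_j\,\partial_{x_j}$ and noting that $\theta(\alpha_H) = \sum_{j=1}^k g_j\,\partial_{x_j}(\alpha_H)$ involves only $g_1,\dots,g_k$, a direct computation (expanding the $g_j$ over the $S_1$-basis of monomials in $x_{k+1},\dots,x_n$) yields
\begin{align*}
D(\A_X, m_X) = \bigl(D(\B, m) \otimes_{S_1} S\bigr) \oplus \bigoplus_{i=k+1}^{n} S\,\partial_{x_i},
\end{align*}
where $\B$ is the essential arrangement in $\operatorname{Spec} S_1$ with the same forms and multiplicities. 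The second summand is free, so it suffices to show that $D(\B,m)$ is free over $S_1$.

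Finally I would descend. With the chosen coordinates $\mathfrak{p} = (x_1,\dots,x_k)S$ is the extension of the irrelevant graded maximal ideal $\mathfrak{m}_1 = (x_1,\dots,x_k)$ of $S_1$, and $(S_1)_{\mathfrak{m}_1} \to S_{\mathfrak{p}}$ is a faithfully flat local homomorphism. From the first step and the decomposition, $D(\B,m)_{\mathfrak{m}_1} \otimes_{(S_1)_{\mathfrak{m}_1}} S_{\mathfrak{p}} = D(\B,m)\otimes_{S_1}S_{\mathfrak{p}}$ is a direct summand of a free $S_{\mathfrak{p}}$-module, hence free over the local ring $S_{\mathfrak{p}}$; faithfully flat descent of projectivity then gives that $D(\B,m)_{\mathfrak{m}_1}$ is free over $(S_1)_{\mathfrak{m}_1}$. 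Since $\B$ is defined by homogeneous forms, $D(\B,m)$ is a finitely generated graded $S_1$-module, and a graded module that is locally free at the irrelevant maximal ideal is graded free (graded Nakayama: lift a minimal homogeneous generating set to a surjection from a free module of the correct rank, and observe that its kernel is graded and vanishes after localizing at $\mathfrak{m}_1$, hence is zero). Thus $D(\B,m)$ is $S_1$-free, and by the previous paragraph $(\A_X,m_X)$ is free. The main obstacle is precisely this passage from freeness at the single prime $\mathfrak{p}_X$ to global freeness — local freeness at one prime is far weaker than projectivity, so the argument must genuinely use that $\A_X$ is a cone over $X$; the product decomposition is what reduces the problem to an essential arrangement whose relevant prime is the irrelevant graded maximal ideal, where graded Nakayama does the upgrading. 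Verifying the decomposition and the faithfully-flat descent are the technical points, both routine but where the structure of $\A_X$ really enters.
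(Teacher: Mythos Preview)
The paper does not prove this lemma; it merely cites it from Abe \cite[Lemma 3.8]{abe2006freeness}. So there is no in-paper proof to compare against.

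That said, your argument is correct and is essentially the standard one (the multiarrangement version of Orlik--Terao, Theorem~4.37). The three steps --- (i)~the identification $D(\A,m)_{\mathfrak p}=D(\A_X,m_X)_{\mathfrak p}$ via the vacuity of the conditions for $H\notin\A_X$ after inverting $\alpha_H$; (ii)~the product decomposition $D(\A_X,m_X)\cong\bigl(D(\B,m)\otimes_{S_1}S\bigr)\oplus\bigoplus_{i>k}S\,\partial_{x_i}$ in adapted coordinates; and (iii)~the passage from freeness at $\mathfrak p$ to freeness of the graded $S_1$-module $D(\B,m)$ via graded Nakayama at $\mathfrak m_1$ --- are all sound. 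A couple of minor points worth tightening in a final write-up: in step~(iii) you invoke faithfully flat descent of projectivity, which is valid for finitely presented modules; you should note explicitly that $D(\B,m)\subset\Der(S_1)$ is finitely generated over the Noetherian ring $S_1$, hence finitely presented, so the descent applies and the resulting projective module over the local ring $(S_1)_{\mathfrak m_1}$ is free. Alternatively, one can avoid faithfully flat descent entirely: since $S_{\mathfrak p}/\mathfrak m_1 S_{\mathfrak p}\cong\operatorname{Frac}(S_2)$ is a field, the rank of a free $S_{\mathfrak p}$-basis of $D(\B,m)\otimes_{S_1}S_{\mathfrak p}$ equals $\dim_{S_1/\mathfrak m_1}D(\B,m)/\mathfrak m_1 D(\B,m)$, and then a direct graded Nakayama argument over $S_1$ finishes. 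Either route is fine; your identification of the main obstacle --- that local freeness at a single prime must be upgraded using the cone structure of $\A_X$ --- is exactly right.
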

\begin{lemma}[Terao {\cite[Lemma 1]{terao1983free-potjasams}}, Ziegler {\cite[Theorem 1.5]{ziegler1990matroid-totams}}]\label{fieldextension}
Let $ (\mathcal{A},m) $ be a multiarrangement in a vector space $V$ over a field $ \mathbb{K} $. 
For any field extension $ \mathbb{F}/\mathbb{K} $, $ (\mathcal{A},m) $ is free if and only if $ (\mathcal{A}_{\mathbb{F}}, m_{\mathbb{F}}) $ is free, where $ \mathcal{A}_{\mathbb{F}} $ denotes the arrangement in $V \otimes_{\mathbb{K}} \mathbb{F}$ consisting of hyperplanes $ H \otimes_{\mathbb{K}} \mathbb{F} $ for $ H \in \mathcal{A} $ and $ m_{\mathbb{F}}(H \otimes_{\mathbb{K}} \mathbb{F}) \coloneqq m(H) $. 
\end{lemma}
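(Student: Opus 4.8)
The plan is to reduce the whole statement to a single base-change identity for the logarithmic derivation module, after which both implications fall out from elementary properties of flat ring extensions. Write $ S $ for the symmetric algebra of $ V^{\ast} $ and set $ S_{\mathbb{F}} \coloneqq S \otimes_{\mathbb{K}} \mathbb{F} $, which is the coordinate ring of $ V \otimes_{\mathbb{K}} \mathbb{F} $. The central step is to establish the isomorphism of $ S_{\mathbb{F}} $-modules
\[
D(\mathcal{A}_{\mathbb{F}}, m_{\mathbb{F}}) \cong D(\mathcal{A}, m) \otimes_{S} S_{\mathbb{F}}.
\]
To this end I would present $ D(\mathcal{A}, m) $ as a kernel: the assignment $ \theta \mapsto (\theta(\alpha_{H}) \bmod \alpha_{H}^{m(H)})_{H \in \mathcal{A}} $ defines an $ S $-linear map
\[
\phi \colon \Der(S) \longrightarrow \bigoplus_{H \in \mathcal{A}} S/(\alpha_{H}^{m(H)})
\]
whose kernel is exactly $ D(\mathcal{A}, m) $ by definition. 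Since $ \mathbb{F} $ is free, hence flat, over $ \mathbb{K} $, the extension $ S \to S_{\mathbb{F}} $ is flat (indeed $ S_{\mathbb{F}} $ is a free $ S $-module on any $ \mathbb{K} $-basis of $ \mathbb{F} $). Tensoring the exact sequence $ 0 \to D(\mathcal{A}, m) \to \Der(S) \xrightarrow{\phi} \bigoplus_{H} S/(\alpha_{H}^{m(H)}) $ with $ S_{\mathbb{F}} $ over $ S $ therefore preserves exactness, so $ D(\mathcal{A}, m) \otimes_{S} S_{\mathbb{F}} \cong \ker(\phi \otimes_{S} S_{\mathbb{F}}) $. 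It then remains to identify the right-hand side with $ D(\mathcal{A}_{\mathbb{F}}, m_{\mathbb{F}}) $, using the canonical isomorphisms $ \Der(S) \otimes_{S} S_{\mathbb{F}} \cong \Der(S_{\mathbb{F}}) $ (both free on the $ \partial/\partial x_{i} $) and $ (S/(\alpha_{H}^{m(H)})) \otimes_{S} S_{\mathbb{F}} \cong S_{\mathbb{F}}/(\alpha_{H}^{m(H)}) $, together with the fact that $ \mathcal{A}_{\mathbb{F}} $ has the same defining forms $ \alpha_{H} $ and the same multiplicities.

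With the displayed isomorphism in hand, the forward implication is immediate: if $ D(\mathcal{A}, m) $ is $ S $-free then $ D(\mathcal{A}, m) \otimes_{S} S_{\mathbb{F}} $ is $ S_{\mathbb{F}} $-free. For the converse I would exploit that $ D(\mathcal{A}, m) $ is a finitely generated graded module over the graded polynomial ring $ S $ and compare minimal graded free resolutions. Let $ F_{\bullet} \to D(\mathcal{A}, m) $ be a minimal graded free resolution over $ S $. Flatness of $ S \to S_{\mathbb{F}} $ makes $ F_{\bullet} \otimes_{S} S_{\mathbb{F}} $ a free resolution of $ D(\mathcal{A}_{\mathbb{F}}, m_{\mathbb{F}}) $, and it stays minimal because the differentials are represented by the same matrices, whose entries lie in the irrelevant ideal $ \mathfrak{m} $ and hence in $ \mathfrak{m} S_{\mathbb{F}} $. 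Consequently the projective dimensions agree, $ \operatorname{pd}_{S} D(\mathcal{A}, m) = \operatorname{pd}_{S_{\mathbb{F}}} D(\mathcal{A}_{\mathbb{F}}, m_{\mathbb{F}}) $; if the latter module is free this dimension is $ 0 $, so $ D(\mathcal{A}, m) \cong F_{0} $ is free. Equivalently, one may invoke faithfully flat descent of projectivity (as $ S \to S_{\mathbb{F}} $ is faithfully flat) together with the fact that finitely generated graded projective $ S $-modules are free.

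I expect the base-change identity to carry essentially all the content of the lemma. The delicate points are verifying that forming $ D $ as a kernel genuinely commutes with $ -\otimes_{S} S_{\mathbb{F}} $ — this is precisely where flatness of $ \mathbb{F}/\mathbb{K} $ is used — and making the compatible identifications of $ \Der(S_{\mathbb{F}}) $, of the quotients $ S_{\mathbb{F}}/(\alpha_{H}^{m(H)}) $, and of the extended defining forms. Once that is settled, the homological step is routine: minimality of a graded free resolution is a statement about the differential entries lying in $ \mathfrak{m} $, and this property is manifestly preserved by the flat extension $ S \to S_{\mathbb{F}} $, so the equality of projective dimensions — and with it the equivalence of freeness over $ S $ and over $ S_{\mathbb{F}} $ — follows at once.
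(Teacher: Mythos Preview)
The paper does not supply its own proof of this lemma: it is quoted as a known result of Terao and Ziegler, with no argument given in the text. So there is no in-paper proof to compare your proposal against.

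That said, your argument is sound and is essentially the standard one. The kernel presentation of $D(\mathcal{A},m)$ and the flatness of $S \to S_{\mathbb{F}}$ give the base-change isomorphism $D(\mathcal{A}_{\mathbb{F}},m_{\mathbb{F}}) \cong D(\mathcal{A},m)\otimes_{S}S_{\mathbb{F}}$ cleanly; the forward implication is then immediate, and for the converse the preservation of minimality of a graded free resolution under the faithfully flat extension $S \to S_{\mathbb{F}}$ (entries of the differentials stay in the irrelevant ideal) yields equality of projective dimensions. The only point worth making explicit is that $D(\mathcal{A},m)$ is a finitely generated graded $S$-module (as a graded submodule of the free module $\Der(S)$ over the Noetherian ring $S$), so that a minimal graded free resolution exists; you use this implicitly but it is routine.
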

\begin{lemma}\label{localinduced}
Suppose a graph $G$ is a path $ v_{1} \cdots v_{k} $ with $ k \geq 3 $ and $\psitilde  \colon V_{G} \rightarrow 2^{\mathbb{K}}$ satisfies $\psitilde(v_{1}) \supsetneq \psitilde(v_{2}) = \dots = \psitilde(v_{k-1}) \subsetneq \psitilde(v_{k}) $.
Let $H_{0}=\{ z=0 \}$.
Then $(\A_{G,\psitilde})_{X}$ is free for any $X \in L(\A_{G,\psitilde}^{H_{0}}) \setminus \{ \bigcap_{H \in \A_{G,\psitilde}} H \}$. 
\end{lemma}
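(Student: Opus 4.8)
The plan is to determine all the relevant localizations $\A_{X}$ explicitly and to recognize each of them as a $\psi$-graphical arrangement of a vertex-weighted forest that admits a weighted elimination ordering; freeness then follows from Theorem \ref{basis} (equivalently from Theorem \ref{thm:main1}). Write $\A=\A_{G,\psitilde}$, let $x_{1},\dots,x_{k}$ be the coordinates corresponding to the path $v_{1}\cdots v_{k}$, and recall that $\A$ consists of $\{z=0\}$, the edge hyperplanes $\{x_{j}=x_{j+1}\}$ for $1\leq j\leq k-1$, and the hyperplanes $\{x_{i}=az\}$ for $a\in\psitilde(v_{i})$. On $H_{0}=\{z=0\}$ all of the hyperplanes $\{x_{i}=az\}$ restrict to the single hyperplane $\{x_{i}=0\}$, so $\A^{H_{0}}$ has hyperplanes $\{x_{j}=x_{j+1}\}$ ($1\leq j\leq k-1$) and $\{x_{i}=0\}$ for those $i$ with $\psitilde(v_{i})\neq\varnothing$. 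Consequently a flat $X\in L(\A^{H_{0}})$ is completely described by the two sets $Z\coloneqq\{i\mid x_{i}\equiv 0\text{ on }X\}$ and $J\coloneqq\{j\mid X\subseteq\{x_{j}=x_{j+1}\}\}$, since $X=\{z=0\}\cap\bigcap_{j\in J}\{x_{j}=x_{j+1}\}\cap\bigcap_{i\in Z}\{x_{i}=0\}$.

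Two elementary observations then pin down $\A_{X}$. If $j\in J$ then $x_{j}\equiv x_{j+1}$ on $X$, so $j\in Z\Leftrightarrow j+1\in Z$; hence each connected block of the forest on $\{1,\dots,k\}$ with edge set $J$ is either contained in $Z$ or disjoint from $Z$, and $\dim X$ equals the number of blocks disjoint from $Z$. In particular $X$ is the center $\bigcap_{H\in\A}H$ (which is the origin, as $\psitilde(v_{1})\neq\varnothing$) precisely when $Z=\{1,\dots,k\}$, so the hypothesis that $X$ is proper means exactly that $Z\subsetneq\{1,\dots,k\}$. Next, since $z\equiv 0$ on $X$, a hyperplane $\{x_{i}=az\}$ contains $X$ iff $x_{i}\equiv 0$ on $X$, i.e.\ iff $i\in Z$; an edge hyperplane $\{x_{j}=x_{j+1}\}$ contains $X$ iff $j\in J$; and $\{z=0\}$ always contains $X$. (The pairs $\{i,i'\}$ with $x_{i}\equiv x_{i'}\equiv 0$ on $X$ but $|i-i'|>1$ contribute nothing, as $\{x_{i}=x_{i'}\}$ is not a hyperplane of $\A$.) Therefore $\A_{X}=\A_{G',\psi'}$, where $G'$ is the graph on $\{1,\dots,k\}$ with edge set $J$ (a disjoint union of subpaths of the path $G$) and $\psi'(v_{i})=\psitilde(v_{i})$ for $i\in Z$, $\psi'(v_{i})=\varnothing$ otherwise.

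It remains to check, via Theorem \ref{thm:vwg}, that $(G',\psi')$ has a weighted elimination ordering whenever $Z\subsetneq\{1,\dots,k\}$. The graph $G'$ is a forest, hence chordal, and every induced path of $G'$ is a subinterval $[p,q]$ lying in a single block of $G'$. If that block is disjoint from $Z$, then $\psi'$ is identically $\varnothing$ on $[p,q]$, so the path is unimodal. If that block is contained in $Z$, then $\psi'$ agrees with $\psitilde$ on $[p,q]$; writing $A=\psitilde(v_{1})$, $B=\psitilde(v_{2})=\dots=\psitilde(v_{k-1})$, $C=\psitilde(v_{k})$ with $A\supsetneq B\subsetneq C$, the weight word read along $[p,q]$ is a contiguous subword of $(A,B,\dots,B,C)$, and every such subword other than the whole word is nonincreasing, constant, or nondecreasing, hence unimodal; since $[p,q]\subseteq Z\subsetneq\{1,\dots,k\}$ forces $[p,q]$ to be a proper subinterval, the whole word is excluded. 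Thus every induced path of $(G',\psi')$ is unimodal, so $(G',\psi')$ has a weighted elimination ordering by Theorem \ref{thm:vwg}, and $\A_{X}=\A_{G',\psi'}$ is free by Theorem \ref{basis}. The main work is the bookkeeping of the first two paragraphs — correctly describing the block structure of $X$ and verifying that $\A_{X}$ is exactly the stated $\psi'$-graphical arrangement; the dimension count isolating the center and the unimodality argument are then short.
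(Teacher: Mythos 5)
Your proof is correct, and it follows the same overall strategy as the paper: recognize each proper localization $(\A_{G,\psitilde})_{X}$ as a $\psi$-graphical arrangement of a vertex-weighted subgraph that avoids the forbidden pattern, then invoke the freeness results already established for simple $\psi$-graphical arrangements. The difference is in the bookkeeping. The paper writes $(\A_{G,\psitilde})_{X}=\A_{S,\psitilde'}$, asserts that one may take $S$ connected, and splits into two cases: if no hyperplane $\{x_{i}=az\}$ occurs, the localization is the graphical arrangement of a chordal graph together with $\{z=0\}$ (Theorem \ref{thm:stanley}); otherwise connectivity of $S$ propagates the full weights to every vertex of $S$, and $S\subsetneq G$ (because $X$ is not the center) excludes the bad pattern, so Theorem \ref{thm:main1} (4) applies. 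You instead classify every flat of the Ziegler restriction by the pair $(Z,J)$, identify $\A_{X}$ as the arrangement of a weighted forest whose blocks carry either the full restriction of $\psitilde$ or the empty weight, and check the unimodality criterion of Theorem \ref{thm:vwg} directly, concluding via Theorem \ref{basis}. What your route buys is an explicit treatment of the disconnected localizations, including those with some weighted and some unweighted blocks, which the paper dispatches with the unjustified reduction ``we may assume that $S$ is connected''; in that respect your argument is the more self-contained of the two, at the cost of the $(Z,J)$ bookkeeping, while the paper's case split is shorter once that reduction is granted.
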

\begin{proof}
Since $(\A_{G,\psitilde})_{X}$ is a subarrangement of the $ \psitilde $-graphical arrangement $ \mathcal{A}_{G,\psitilde} $ containing $ H_{0} $, we may put $\A_{S,\psitilde'}=(\A_{G,\psitilde})_{X}$, where $S$ is a subgraph of $G$ and $\psitilde':V_{S} \rightarrow 2^{\mathbb{K}}$.
Moreover, we may assume that $S$ is connected.

Let $ x_{1},\dots,x_{k} $ be the coordinates corresponding to the vertices $ v_{1}, \dots, v_{k} $. 
If $\A_{S,\psitilde'}$ does not contain the hyperplanes of the form $\{ x_{i}=az \}$, then $\A_{S,\psitilde'}$ is a simple graphical arrangement of the chordal graph $ S $ with the hyperplane $ H_{0} $. 
Hence $\A_{S,\psitilde'}$ is free by Theorem \ref{thm:stanley}.

Assume that $ \mathcal{A}_{S,\psitilde'} $ contains at least one hyperplane $ \{x_{i_{0}}=a_{0}z\} $, i.e., there exist a vertex $ v_{i_{0}} \in V_{S} $ and an element $ a_{0} \in \psitilde'(v_{i_{0}}) $. 
Since $ H_{0}=\{z=0\} \in \mathcal{A}_{S,\psitilde'} $, we have $ \{x_{i_{0}}=az\} \in \mathcal{A}_{S,\psitilde'} $ for any $ a \in \psitilde(v_{i}) $. 
Moreover, since $ S $ is connected, we have $ \{x_{i}=az\} \in \mathcal{A}_{S,\psitilde'} $ for any $ v_{i} \in V_{S} $ and $ a \in \psitilde(v_{i}) $. 
Therefore $\psitilde'=\psitilde_{S}$, where $\psitilde_{S} \coloneqq \psitilde|_{V_{S}}$. 
Since $X \neq \bigcap_{H \in \A_{G,\psitilde}} H$, we have $\A_{S,\psitilde'}=(\A_{G,\psitilde})_{X} \subsetneq \mathcal{A}_{G,\psitilde}$, namely $S \subsetneq G$.  Hence $(S,\psitilde_{S})$ satisfies the condition in Theorem \ref{thm:main1} (4).
Therefore $\A_{S,\psitilde'}=\A_{S,\psitilde_{S}}$ is free.
\end{proof}

\begin{proof}[Proof of Theorem \ref{thm:multi}]
By Lemma \ref{fieldextension}, we may assume that the ground field $ \mathbb{K} $ of $ \mathcal{M}_{G,\psi} $ is infinite. 
Then there exists an injection $ \iota \colon \mathbb{Z}_{> 0} \rightarrow \mathbb{K} $. 
Define $\psitilde \colon V_{G} \rightarrow 2^{\mathbb{K}}$ by $\widetilde{\psi}(i)=\{ \iota(1), \dots, \iota(\psi(i)) \}$. 
Then $ \mathcal{M}_{G,\psi} $ is the Ziegler restriction of the $ \psitilde $-graphical arrangement $ \mathcal{A}_{G,\psitilde} $ on to the hyperplane $ \{z=0\} $. 

Suppose that $ (G,\psi) $ has a weighted elimination ordering. 
Then $ (G,\psitilde) $ also has a weighted elimination ordering. 
Hence, by Theorem \ref{basis}, $ D(\mathcal{A}_{G,\psitilde}) $ has a basis $\{ \theta_{E},\theta_{1},\ldots,\theta_{\ell} \}$ whose coefficient matrix is lower triangular and $ \theta_{i}(z) = 0 $ for $ 1 \leq i \leq \ell $. 
Therefore, we can see $(1) \Rightarrow (2)$ from Theorem \ref{ziemulti}.

The implication $(2) \Rightarrow (3)$ is obvious. 

Let us show $(3) \Rightarrow (1)$. 
Suppose that $ \mathcal{M}_{G,\psi} $ is free. 
It is easy to verify that the graphical arrangement $ \mathcal{A}_{G} $ is a localization of $ \mathcal{M}_{G,\psi} $ by Proposition \ref{prop:local}. 
By Lemma \ref{multilocal}, we have $ \mathcal{A}_{G} $ is free, and hence $ G $ is chordal by Theorem \ref{thm:stanley}. 

Assume that there exists an induced path $ P = v_{1} \cdots v_{k} $ of $ G $ with $ k \geq 3 $ and $ \psi(v_{1}) > \psi(v_{2}) = \dots = \psi(v_{k-1}) < \psi(v_{k}) $. 
Note that $ \mathcal{M}_{P,\psi_{P}} $ is a localization of $ \mathcal{M}_{G,\psi} $ by Proposition \ref{prop:local}. 
Hence it follows from Lemma \ref{multilocal} that $ \mathcal{M}_{P,\psi_{P}} $ is free. 
However, Using Lemma \ref{localinduced} and Theorem \ref{yoshicriterion}, we have $ \mathcal{M}_{P,\psi_{P}} $ is not free since $ \mathcal{A}_{P,\widetilde{\psi_{P}}} $ is not free by Theorem \ref{thm:main1}. 
This is a contradiction. 
Therefore $ (G,\psi) $ does not contain such induced paths. 

Since $ \mathbb{Z}_{\geq 0} $ is totally ordered, we conclude that $ (G,\psi) $ has a weighted elimination ordering by Theorem \ref{thm:vwg}. 
\end{proof}

\section*{Acknowledgments}
The authors would like to express the deepest appreciation to Takuro Abe whose comments made enormous contribution to our work.

\bibliographystyle{amsplain1}
\bibliography{bibfile}

\end{document}